\documentclass{amsart}
\usepackage{amsmath,amsthm,amssymb,color}
\usepackage{graphicx}

\newtheorem{Theorem}{Theorem}[section]

\newtheorem{Lemma}[Theorem]{Lemma}
\newtheorem{Proposition}[Theorem]{Proposition}
\newtheorem{Remark}{Remark}[section]

\begin{document}
\title[Self-similar solutions to DNLS]%
{Self-similar solutions to the derivative nonlinear Schr\"odinger equation}

\author[K. Fujiwara]{Kazumasa Fujiwara}

\address[K. Fujiwara]{
Centro di Ricerca Matematica Ennio De Giorgi\\
Scuola Normale Superiore\\
Collegio Puteano, \\
Pisa, I-56100  \\ Italy}

\email{kazumasa.fujiwara@sns.it}

\author[V. Georgiev]{Vladimir Georgiev}

\address[V.Georgiev]{
Department of Mathematics,
University of Pisa,
Largo Bruno Pontecorvo 5,
I - 56127 Pisa, Italy}
\address{
Faculty of Science and Engineering, Waseda University,
3-4-1, Okubo, Shinjuku-ku, Tokyo 169-8555, Japan}
\address{
IMI--BAS, Acad. Georgi Bonchev Str.,
Block 8, 1113 Sofia, Bulgaria
}

\email{georgiev@dm.unipi.it}

\thanks{The second author was supported in part by  Project 2017 "Problemi stazionari e di evoluzione nelle equazioni di campo nonlineari" of INDAM,
GNAMPA - Gruppo Nazionale per l'Analisi Matematica,
la Probabilit\`a e le loro Applicazioni,
by Institute of Mathematics and Informatics,
Bulgarian Academy of Sciences and Top Global University Project, Waseda University,  by the University of Pisa, Project PRA 2018 49 and project "Dinamica di equazioni nonlineari dispersive", "Fondazione di Sardegna" ,2016.
ORCID:https://orcid.org/0000-0001-6796-7644}

 \author[T. Ozawa]{Tohru Ozawa}

\address[T. Ozawa]{T. Ozawa,
Department of Applied Physics \\ Waseda University \\
3-4-1, Okubo, Shinjuku-ku, Tokyo 169-8555 \\ Japan}

\email{txozawa@waseda.jp}
\thanks{The third author was supported by
Grant-in-Aid for Scientific Research (A) Number 26247014.}

\begin{abstract}
A class of self-similar solutions
to the derivative nonlinear Schr\"odinger equations is studied.
Especially, the asymptotics of profile functions are shown
to posses a logarithmic phase correction.
This logarithmic phase correction is obtained from
the nonlinear interaction of profile functions.
This is a remarkable difference from the pseudo-conformally invariant case,
where the logarithmic correction comes from the linear part of the equations
of the profile functions.
\end{abstract}

\subjclass[2000]{35Q55}
\keywords{derivative nonlinear Schr\"odinger equations, self-similar solution}


\maketitle

\section{Introduction}
Among the evolution equations
which have been derived from the magnetohydrodynamics equations,
the derivative nonlinear Schr\"odinger equation (DNLS) has attracted
most attention (see \cite{CLP99,HO92,HO,H,SLPS10,W1})
	\begin{align}
	\begin{cases}
	i \partial_t u + \partial_x^2 u + i \partial_x (|u|^2u) = 0,
	&分\qquad t \geq 1, \quad x \in \mathbb R,\\
	u(1,x) = u_0(x),
	&\qquad x \in \mathbb R.
	\end{cases}
	\label{eq:1}
	\end{align}

DNLS is known to have infinite number of conservation laws
and it is completely integrable.
In $L^2$ or $H^1$ framework,
the equation has the following basic conservation quantities
	\begin{align*}
	\mbox{Mass:\quad} M(u)(t) &= \int_{\mathbb{R}} |u(t,x)|^2 dx,\\
	\mbox{Energy:\quad} E(u)(t) &= \int_{\mathbb{R}} |\partial_x u(t,x)|^2 dx
	+ \frac{3}{2}
	\mathrm{Im} \int_{\mathbb{R}} |u(t,x)|^2 u(t,x) \overline{\partial_x u(t,x)} dx\\
	&+ \frac{1}{2} \int_{\mathbb{R}} | u(t,x)|^6 dx.
	\end{align*}
It is well known (see \cite{HO})
that DNLS \eqref{eq:1} has a global solution in $H^1(\mathbb{R})$,
provided that mass is less than $2\pi$.
The constant $2\pi$ is improved to $4\pi$ in \cite{W1,W2}.

A simple gauge transform
	\[
	u \mapsto v(t,x)
	= u(t,x) \exp \bigg( \frac{i}{2} \int_{-\infty}^x |u(t,y)|^2 dy \bigg)
	\]
transforms DNLS in \eqref{eq:1} into
	\[
	i \partial_t v + \partial_x^2 v + i |v|^{2} \partial_x v
	= 0.
	\]
For details, see \cite{HO92,HO}.
Then, one can consider the generalized DNLS
	\begin{align}
	i \partial_t v + \partial_x^2 v
	+ i \varepsilon |v|^{2\sigma} \partial_x v
	=0,
	\label{eq:2}
	\end{align}
for $\varepsilon = \pm 1$ and $\sigma \geq 1$.
The existence and asymptotic properties of ``quasi'' self-similar solutions
to the generalized DNLS with $\varepsilon = 1$ and $\sigma > 1$
have been studied in \cite{CSS17}.
Here, quasi self-similar solutions are solutions to \eqref{eq:2} of the form
	\[
	u(t,x)
	= C_1 (T^\ast -t)^{-1/4 \sigma} Q\bigg( \frac{x-x^\ast}{\sqrt{T^\ast - t}} \bigg)
	e^{i C_2 + i C_3 \log(T^\ast-t)}
	\]
with some constants $C_1$, $C_2$, and $C_3$, and a complex valued function $Q$
satisfying a second order ordinary differential equation (ODE).
The motivation for their analysis combined with numerical calculations
was to predict the profile of the blowup of the solution in the case where $\sigma >1$.
For related subjects,
we also refer the reader to \cite{HO16}.

Our main goal in  this work is
to prove the existence of self-similar solutions to DNLS \eqref{eq:1},
of the form
	\begin{align}
	u(t,x) = t^{-1/4} Q (t^{-1/2} x),
	\label{eq:3}
	\end{align}
and to derive precise asymptotic expansions for the profile function $Q$.

For pseudo-conformally invariant nonlinear Schr\"odinger equations,
self-similar solutions taking the form \eqref{eq:3}
were studied by Kavian and Weissler \cite{KW94}.
Particularly,
they considered the following Cauchy problem
	\begin{align}
	\begin{cases}
	i \partial_t u + \Delta u + |u|^{4/n}u = 0,
	&\qquad t \geq 1, \quad x \in \mathbb R^n,\\
	u(1,x) = u_0(x),
	&\qquad x \in \mathbb R^n
	\end{cases}
	\label{eq:4}
	\end{align}
and obtained solutions of the form
	\[
	u(t)
	= t^{i \omega_0 / 2 - n /(4-n) } Q_{\mathrm{NLS}}(t^{-1/2} x),
	\]
where $\omega_0$ is a constant
and $Q_{\mathrm{NLS}}$ is a complex valued function satisfying
	\begin{align}
	\Delta Q_{\mathrm{NLS}}
	- \frac i 2 \bigg(y \cdot \nabla Q_{\mathrm{NLS}}
	+ \frac 2{p-1} Q_{\mathrm{NLS}} \bigg)
	- \frac{\omega_0}{2}\nabla Q_{\mathrm{NLS}}
	+ |Q_{\mathrm{NLS}}|^{4/n} Q_{\mathrm{NLS}}
	=0.
	\label{eq:5}
	\end{align}
Especially, they showed that
there exists some radial function $\widetilde Q_{\mathrm{NLS}}$
satisfying \eqref{eq:5} with $Q_{\mathrm{NLS}}(y) = \widetilde Q_{\mathrm{NLS}}(|y|)$
with asymptotics
	\begin{align}
	\widetilde Q_{\mathrm{NLS}}(r)
	= C \frac{1}{r^{n/2}} \sin \bigg( \frac{r^2}{2}
	- \omega_0 \log \bigg(\frac{r}{2\sqrt{2}} \bigg) + \theta_0 \bigg)
	\exp \bigg( i \frac{r^2}{2} \bigg) + O(r^{-2-n/2})
	\label{eq:6}
	\end{align}
with some constant $\theta_0$ as $r \to \infty$.
We remark that the logarithmic phase correction of \eqref{eq:6}
comes from linear part of the ODE \eqref{eq:5}.
In order to show asymptotics \eqref{eq:6},
they deformed $Q_{\mathrm{NLS}}(y)$
to $e^{i y^2/8} A_{\mathrm{NLS}}(y)$
with $y = t^{-1/2} x$.
By this deformation,
\eqref{eq:6} implies that $A_{\mathrm{NLS}}$ satisfies
	\begin{align}
	\Delta A_{\mathrm{NLS}}(y)
	+ \frac{y^2}{16} A_{\mathrm{NLS}}(y)
	- \frac{\omega_0}{2} A_{\mathrm{NLS}}(y)
	+ |A_{\mathrm{NLS}}(y)|^{4/n} A_{\mathrm{NLS}}(y)
	=0.
	\label{eq:7}
	\end{align}
Moreover,
they further rewrite $A_{\mathrm{NLS}}(y)$ as $|y|^{-n/2} B_{\mathrm{NLS}}(|y|^2/8)$,
where $B_{\mathrm{NLS}}$ satisfies
	\begin{align}
	&B_{\mathrm{NLS}}''(\eta) + B_{\mathrm{NLS}}(\eta)
	- \frac{\omega_0}{\eta} B_{\mathrm{NLS}}(\eta)
	\nonumber\\
	&=
	\frac{n(n-4)}{16 \eta^2} B_{\mathrm{NLS}}(\eta)
	- \frac{1}{4\eta^2} |B_{\mathrm{NLS}}(\eta)|^{4/n} B_{\mathrm{NLS}}(\eta).
	\label{eq:8}
	\end{align}
Then it is shown that solutions to \eqref{eq:8} behave as
$C \sin(\eta - \frac{\omega_0}{2} \log \eta + \omega_1)$ as $\eta \to \infty$
with some constants $C$ and $\omega_1$, which implies the asymptotics of \eqref{eq:6}.
The motivation to deform \eqref{eq:7} to \eqref{eq:8}
may come from the form of the solutions to linearized equation of \eqref{eq:7}
with $\omega_0=0$.
In one dimensional case,
solutions to linearized equation of \eqref{eq:7} are given as follows:

\begin{Proposition}[{\cite[8.491 7]{T}}]
\label{Proposition:1.1}
Let $g$ be a real analytic function satisfying
	\begin{align}
	\begin{cases}
	g''(y) + \frac{y^2}{16} g(y) = 0,
	&\qquad y \in \mathbb R,\\
	g(0) = g_0,
	\quad
	g'(0) = g_1.
	\end{cases}
	\label{eq:9}
	\end{align}
Then $g$ is given by
	\begin{align}
	g(y)
	= g_0 G_{\mathrm{even}}(y) + g_1 G_{\mathrm{odd}}(y),
	\label{eq:10}
	\end{align}
where
	\begin{align}
	G_{\mathrm{even}}(y)
	= \frac{1}{2} \Gamma(\frac 3 4) |y|^{1/2} J_{-1/4}(\frac{y^{2}}{8}),
	\qquad
	G_{\mathrm{odd}}(y)
	= 2 \Gamma(\frac{5}{4}) y|y|^{-1/2} J_{1/4}(\frac{y^{2}}{8}).
	\label{eq:11}
	\end{align}
Here $\Gamma$ is the gamma function and
$J_{\nu}$ is the Bessel function of the first kind of order $\nu$.
Moreover, for $|y| \gg 1$,
	\begin{align}
	G_{\mathrm{even}}(y)
	&= 2 \Gamma(\frac 3 4) \pi^{-1/2}
	|y|^{-1/2}\cos\bigg(\frac{y^{2}}{8}-\frac{\pi}{8}\bigg)
	+ O(|y|^{-3/2}),
	\label{eq:12}\\
	G_{\mathrm{odd}}(y)
	&= 8 \Gamma (\frac 5 4) \pi^{-1/2}
	y|y|^{-3/2}\cos\bigg(\frac{y^{2}}{8}-\frac{3\pi}{8}\bigg)
	+ O(|y|^{-3/2}).
	\label{eq:13}
	\end{align}
\end{Proposition}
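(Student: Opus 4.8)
The plan is to reduce the linear equation in \eqref{eq:9} to Bessel's equation of order $1/4$ and then to read off both the representation \eqref{eq:10}--\eqref{eq:11} and the asymptotics \eqref{eq:12}--\eqref{eq:13} from the classical small- and large-argument expansions of $J_{\pm1/4}$. First, for $y>0$ I would substitute $g(y)=y^{1/2}w(\xi)$ with $\xi=y^2/8$. Computing $g'$ and $g''$ via $d\xi/dy=y/4$ and collecting terms transforms $g''+\tfrac{y^2}{16}g=0$ into
\[
\xi^2 \ddot w + \xi\dot w + \Big(\xi^2 - \tfrac{1}{16}\Big)w = 0,
\]
which is exactly Bessel's equation with $\nu^2=1/16$. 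Hence the two independent solutions on $\{y>0\}$ are $y^{1/2}J_{1/4}(y^2/8)$ and $y^{1/2}J_{-1/4}(y^2/8)$, the orders $\pm1/4$ being admissible since $1/4\notin\mathbb Z$; this is the content of \cite[8.491 7]{T}. Replacing $y^{1/2}$ by $|y|^{1/2}$ and by $y|y|^{-1/2}$ produces the even and odd candidates $G_{\mathrm{even}}$ and $G_{\mathrm{odd}}$. Inserting the power series $J_{\pm1/4}(z)=\sum_{k\ge 0}\frac{(-1)^k}{k!\,\Gamma(k\pm\frac14+1)}(z/2)^{2k\pm1/4}$ with $z=y^2/8$ shows that $G_{\mathrm{even}}$ is a power series in $y^4$ and $G_{\mathrm{odd}}$ is $y$ times such a series, so both are genuinely real analytic on all of $\mathbb R$.

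Next I would fix the normalizing constants by matching the data at $y=0$. The leading term $J_\nu(z)\sim\Gamma(\nu+1)^{-1}(z/2)^\nu$ gives $|y|^{1/2}J_{-1/4}(y^2/8)\sim 2\,\Gamma(3/4)^{-1}$ and $y|y|^{-1/2}J_{1/4}(y^2/8)\sim\tfrac12\,\Gamma(5/4)^{-1}\,y$ as $y\to0$. Thus the prefactors $\tfrac12\Gamma(3/4)$ and $2\Gamma(5/4)$ in \eqref{eq:11} are precisely those making $G_{\mathrm{even}}(0)=1$ and $G_{\mathrm{odd}}'(0)=1$; the evenness of $G_{\mathrm{even}}$ forces $G_{\mathrm{even}}'(0)=0$ and the oddness of $G_{\mathrm{odd}}$ forces $G_{\mathrm{odd}}(0)=0$. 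Since \eqref{eq:9} is linear with analytic coefficients, the standard existence--uniqueness theorem gives a unique solution for each $(g_0,g_1)$, whence $g=g_0G_{\mathrm{even}}+g_1G_{\mathrm{odd}}$, which is \eqref{eq:10}.

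For the asymptotics I would insert the large-argument expansion $J_\nu(z)=\sqrt{2/(\pi z)}\,\cos\big(z-\tfrac{\nu\pi}{2}-\tfrac{\pi}{4}\big)+O(z^{-3/2})$ with $z=y^2/8$, noting $\sqrt{2/(\pi z)}=4\pi^{-1/2}|y|^{-1}$. For $\nu=-1/4$ this yields $J_{-1/4}(y^2/8)=4\pi^{-1/2}|y|^{-1}\cos(\tfrac{y^2}{8}-\tfrac{\pi}{8})+O(|y|^{-3})$, and multiplying by $\tfrac12\Gamma(3/4)|y|^{1/2}$ produces \eqref{eq:12}; for $\nu=1/4$ the phase becomes $\tfrac{y^2}{8}-\tfrac{3\pi}{8}$, and multiplying by $2\Gamma(5/4)\,y|y|^{-1/2}$ produces \eqref{eq:13}.

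I expect no genuine conceptual obstacle, since everything reduces to the Bessel substitution together with classical facts about $J_{\pm1/4}$. The only points demanding care are the bookkeeping of the Gamma-factor normalizations at $y=0$ and the consistent two-sided extension through $y=0$ that preserves real analyticity: it is exactly these requirements that force the constants $\tfrac12\Gamma(3/4)$ and $2\Gamma(5/4)$, the pairing of the even part with $J_{-1/4}$ and the odd part with $J_{1/4}$, and the particular half-integer powers of $|y|$ appearing in \eqref{eq:11}.
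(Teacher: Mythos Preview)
Your proposal is correct and follows essentially the same route as the paper: reduce \eqref{eq:9} to Bessel's equation via the substitution $g(y)=|y|^{1/2}w(y^2/8)$, identify the even/odd solutions with $J_{\mp1/4}$ through their power series at the origin (thereby fixing the $\Gamma$-constants), and then read off \eqref{eq:12}--\eqref{eq:13} from the large-argument asymptotics of $J_\nu$. The only addition in the paper's proof, not strictly part of the stated proposition, is a short verification that $G_{\mathrm{even}},G_{\mathrm{odd}}\in X$ via the identity $J_\nu'=\tfrac12(J_{\nu-1}-J_{\nu+1})$.
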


\noindent
We recall that with real parameter $\nu$,
$J_{\nu}$ is defined as a solution to the Bessel equation
	\[
	y^{2} J_\nu'' + y J_\nu' + (y^{2}-\nu^{2})J_\nu=0.
	\]
If one regards the nonlinearity of \eqref{eq:7} as a perturbation,
the decay rate of solutions may be kept but the phase of asymptotics may be modified.
Therefore,
from \eqref{eq:12} and \eqref{eq:13},
the deformation from $A_{\mathrm{NLS}}$ to $B_{\mathrm{NLS}}$ arises naturally.

In this paper,
we study the profile function $Q$ of \eqref{eq:3} in a similar manner.
Since the Cauchy problems \eqref{eq:1} and \eqref{eq:4} share the linear part,
we also expect that $Q$ is asymptotically close to \eqref{eq:10}.
Therefore,
the profile function $Q$ will be looked for in a function space $X$ defined by
	\[
	X =
	\{ f \in C^2(\mathbb R)
	\ ; \ \langle \cdot \rangle^{1/2} f,\ \langle \cdot \rangle^{-1/2} f'
	\in L^\infty(\mathbb R) \},
	\]
where
$\langle x \rangle = ( 1+ |x|^2)^{1/2}$.
We remark that $G_{\mathrm{even}}$ and $G_{\mathrm{odd}}$ belong to $X$.

Our main result is the following.

\begin{Theorem}
\label{Theorem:1.2}
There is $Q \in X$ such that $u$ given by \eqref{eq:3}
satisfies \eqref{eq:1} with $u_0 = Q.$
In particular,
there are real constants $Q_\pm$
and real valued functions $\omega_\pm$ and $\phi_\pm$ such that
$\lim_{\eta \to \infty} \omega_\pm(\eta) = - \infty$ and
	\[
	Q(y) =
	\begin{cases}
	Q_+ |y|^{-1/2} \cos(\omega_+(y^2)) e^{i \phi_+(y^2)} + O(|y|^{-1}),
	&\mathrm{if} \quad y \gg 1,\\
	Q_- |y|^{-1/2} \cos(\omega_-(y^2)) e^{i \phi_-(y^2)} + O(|y|^{-1}),
	&\mathrm{if} \quad y \ll -1.
	\end{cases}
	\]
Therefore, with some positive constant $C$, we have
	\begin{align}
	\begin{cases}
	|u(t,x) - Q_+ |x|^{-1/2} \cos(\omega_+(x^2/t)) e^{i \phi_+(x^2/t)}|
	\leq C t^{1/2} |x|^{-1},
	&\mathrm{if} \quad x \gg t^{1/2},\\
	|u(t,x) - Q_- |x|^{-1/2} \cos(\omega_-(x^2/t)) e^{i \phi_-(x^2/t)}|
	\leq C t^{1/2} |x|^{-1},
	&\mathrm{if} \quad x \ll - t^{1/2}.
	\end{cases}
	\label{eq:14}
	\end{align}
\end{Theorem}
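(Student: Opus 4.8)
The plan is to turn the problem into the asymptotic analysis of a second order ODE. Inserting the ansatz \eqref{eq:3} into \eqref{eq:1} with $y=t^{-1/2}x$, and using $\partial_t u=-\frac14 t^{-5/4}Q-\frac12 t^{-5/4}yQ'$, $\partial_x^2u=t^{-5/4}Q''$ and $\partial_x(|u|^2u)=t^{-5/4}\frac{d}{dy}(|Q|^2Q)$, the common factor $t^{-5/4}$ cancels and $Q$ must solve
\begin{equation*}
Q''-\frac i2\,yQ'-\frac i4\,Q+i\frac{d}{dy}\bigl(|Q|^2Q\bigr)=0.
\end{equation*}
To eliminate the first order term I would use the gauge $Q(y)=e^{iy^2/8}A(y)$; the choice of the exponent $1/8$ makes the $A'$ contributions cancel, and $A$ then satisfies
\begin{equation*}
A''+\frac{y^2}{16}A=\frac y4\,|A|^2A-i\frac{d}{dy}\bigl(|A|^2A\bigr).
\end{equation*}
The left-hand side is precisely the operator of Proposition \ref{Proposition:1.1}; its homogeneous solutions $G_{\mathrm{even}},G_{\mathrm{odd}}$ lie in $X$ and oscillate with amplitude $|y|^{-1/2}$ by \eqref{eq:12}--\eqref{eq:13}. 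Hence I would look for $A$ as a perturbation of these Bessel-type profiles within $X$.

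Next, following the deformation recalled in \eqref{eq:8}, I would pass to the Bessel-normalised variable by setting $A(y)=|y|^{-1/2}B(\eta)$ with $\eta=y^2/8$. The linear part becomes the harmonic oscillator and the equation takes the form
\begin{equation*}
B''+B=\frac1{2\eta}\Bigl(|B|^2B-i\frac{d}{d\eta}\bigl(|B|^2B\bigr)\Bigr)+R(\eta),
\end{equation*}
where $R(\eta)=O(\eta^{-2})$. This is where the mechanism of the theorem differs from the pseudo-conformal case: in \eqref{eq:8} the slowly decaying $\eta^{-1}$ perturbation is the \emph{linear} term $\omega_0\eta^{-1}B$, whereas here it is the \emph{nonlinear} term $\tfrac1{2\eta}|B|^2B$ and its derivative companion. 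Because $A\in X$ forces $|A|\lesssim|y|^{-1/2}$, hence $|B|\lesssim1$, this perturbation has exact size $\eta^{-1}$ and is therefore \emph{not integrable}; this non-integrability is what generates the logarithmic phase correction announced in the abstract.

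For the asymptotics I would apply variation of parameters against the fundamental system $e^{\pm i\eta}$, writing $B=b_+e^{i\eta}+b_-e^{-i\eta}$ with the standard first order constraint, so that $b_\pm'$ are driven by $e^{\mp i\eta}$ times the right-hand side. Expanding $|B|^2B$ and $\frac{d}{d\eta}(|B|^2B)$ into the harmonics $e^{\pm i\eta},e^{\pm3i\eta}$, the non-oscillatory (resonant) parts contribute a term of exact size $\eta^{-1}$ to $b_\pm'$, whereas the genuinely oscillating parts and $R$ give absolutely convergent integrals. The resonant contribution is, to leading order, purely imaginary, so that $|b_\pm|$ converge to constants while $\arg b_\pm$ pick up a primitive of an $\eta^{-1}$ density, i.e. a logarithmically divergent phase. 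After removing this explicit secular phase with an integrating factor, the remaining system is a contraction on $[\eta_0,\infty)$ for $\eta_0\gg1$ in a weighted $L^\infty$ norm, which simultaneously produces the solution on the half-line and gives the $O(|y|^{-1})$ remainder. For the solution so constructed the two counter-rotating amplitudes share a common limiting modulus, so that $e^{i\eta}B$ collapses to a single real cosine times a slowly varying complex phase; undoing $A=|y|^{-1/2}B$ and $Q=e^{iy^2/8}A$ then yields the form $Q_\pm|y|^{-1/2}\cos(\omega_\pm(y^2))e^{i\phi_\pm(y^2)}$, the oscillatory argument being $y^2/8$ corrected by the logarithmic term (recorded, by evenness of the cosine, as $\omega_\pm\to-\infty$). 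Estimate \eqref{eq:14} follows upon reinserting $y=t^{-1/2}x$.

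To obtain a genuine global profile $Q\in X$ on all of $\mathbb R$, rather than only one asymptotic branch, I would continue the half-line solutions to finite $y$ --- the substitution $A=|y|^{-1/2}B$ being singular only at $y=0$, where one argues directly with the regular equation for $A$ --- and match the branches issuing from $y\to+\infty$ and $y\to-\infty$ at the origin by a shooting argument on the free asymptotic parameters, using an energy identity for $|B'|^2+|B|^2$ to rule out amplitude blow-up along the continuation. The main obstacle I anticipate is exactly the borderline, non-integrable $\eta^{-1}$ nonlinear forcing: a direct fixed point or variation-of-parameters in $X$ diverges logarithmically, so the secular phase must first be isolated and removed by a normal-form/averaging step before any contraction can be closed --- and it is this very step that both creates and quantifies the logarithmic correction. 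Controlling the amplitude and achieving the matching at the origin are the secondary technical points.
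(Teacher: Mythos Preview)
Your asymptotic mechanism is sound and in spirit parallel to the paper, but your route diverges from the paper's at two decisive points, and the second of these is a genuine gap.

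\textbf{The reduction you miss.} The paper does not work with a complex amplitude. Writing $Q=Ae^{i\phi}$ with \emph{real} $A,\phi$ and taking the imaginary part of \eqref{eq:15}, one finds that the phase is \emph{slaved} to the amplitude: $\phi'(y)=\tfrac{y}{4}-\tfrac34 A(y)^2$ (this is Proposition~\ref{Proposition:1.3}). Feeding this back into the real part produces the purely real scalar ODE
\[
A''+\tfrac{y^2}{16}A=\tfrac{y}{4}A^3-\tfrac{3}{16}A^5,
\]
with no derivative nonlinearity and no imaginary term. Your gauge $Q=e^{iy^2/8}A$ leaves $A$ genuinely complex and retains the term $-i\frac{d}{dy}(|A|^2A)$; this is correct but forfeits the real structure. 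In the paper's variables $B=|y|^{1/2}A$, the phase plane $(B,B')$ is two-dimensional real, and polar coordinates $B=R\cos\omega$, $B'=R\sin\omega$ immediately give the cosine form in the statement; your claim that ``the two counter-rotating amplitudes share a common limiting modulus'' is precisely the relation $b_-=\overline{b_+}$ that reality of $A$ gives for free, whereas in your complex setting it is an extra constraint you must impose on the asymptotic data.

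\textbf{The construction gap.} The paper does not build half-line solutions from $\pm\infty$ and shoot. It fixes \emph{small} real data $A(0)=A_0$, $A'(0)=0$, and proves global bounds in $X$ by tailored modified energies (Propositions~\ref{Proposition:4.2} and~\ref{Proposition:4.3}), exploiting that for $y<0$ all three potentials in \eqref{eq:26} are nonnegative, while for $y>0$ smallness controls the single focusing cubic term. Your plan to continue from infinity and match at the origin faces two obstacles you do not address: (i) with the extra modulus constraint imposed on both sides, the parameter count for matching $A(0^\pm),A'(0^\pm)$ is tight and you give no argument that the shooting map hits the diagonal; (ii) your proposed energy $|B'|^2+|B|^2$ is not monotone for the complex equation (the focusing cubic for $y>0$ can pump it), and without smallness there is no a priori reason the inward continuation stays bounded---the paper explicitly notes numerical evidence of blow-up for data that are not small. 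In short, your asymptotic normal-form step is essentially equivalent to the paper's polar-coordinate computation \eqref{eq:40}--\eqref{eq:41}, but your existence argument is both harder than and not reducible to the paper's small-data forward construction, and as written it is incomplete.
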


\begin{Remark}
The asymptotics \eqref{eq:14} imply that
classical solutions to \eqref{eq:1} taking the form of \eqref{eq:3}
are not square-integrable like $Q_{NLS}$.
Moreover, \eqref{eq:14} also implies that
$\lim_{t \to +0} t^{-1/4} Q(t^{-1/2}x)$ does not exist for any $x$.
Therefore, we cannot extend our solutions to $t=0$.
\end{Remark}

In order to show Theorem \ref{Theorem:1.2},
we rewrite $Q$ by $A e^{i \phi}$ with real valued functions $A$ and $\phi$.
We remark that the amplitude function $A$ possibly takes the negative value at some points.
Then the following ODEs hold
for these amplitude and phase functions:

\begin{Proposition}
\label{Proposition:1.3}
Let $A$ and $\phi$ be real valued $C^2(\mathbb R)$ functions
satisfying $\phi'(0) = - 3 A(0)^2/4$.
Let $u$ be given by \eqref{eq:3} with $Q = A e^{i\phi}$
satisfying the Cauchy problem \eqref{eq:1}
with $u_0 = Q$.
Then, $Q$, $\phi$, and $A$ satisfy
	\begin{align}
	Q''(y) - \frac i 4 Q(y) - \frac i 2 y Q'(y)
	&+
	i \frac{d}{dy} (|Q(y)|^2 Q(y)) = 0,
	& y \in \mathbb R,
	\label{eq:15}\\
	\phi'(y) &= \frac y 4 - \frac 3 4 A(y)^2,
	& y \in \mathbb R,
	\label{eq:16}\\
	A''(y) + \frac{y^2}{16} A(y)
	&= \frac y 4 A(y)^3 - \frac{3}{16} A(y)^5,
	& y \in \mathbb R.
	\label{eq:17}
	\end{align}
\end{Proposition}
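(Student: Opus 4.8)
The plan is to start from the self-similar ansatz and mechanically derive the three equations, treating \eqref{eq:15} as the master equation and then peeling off the phase and amplitude relations. First I would substitute $u(t,x) = t^{-1/4} Q(t^{-1/2}x)$ into the DNLS equation \eqref{eq:1}. Writing $y = t^{-1/2}x$ and computing the time derivative, the scaling factor $t^{-1/4}$ together with the $t$-dependence inside $Q$ produces the terms $-\tfrac{1}{4}t^{-5/4}Q(y)$ and $-\tfrac{1}{2}t^{-5/4}yQ'(y)$, so that $i\partial_t u = t^{-5/4}\bigl(-\tfrac{i}{4}Q - \tfrac{i}{2}yQ'\bigr)$. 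The spatial terms scale as $\partial_x^2 u = t^{-5/4}Q''(y)$ and $i\partial_x(|u|^2 u) = t^{-5/4}\, i\,\tfrac{d}{dy}(|Q|^2 Q)$, since each $\partial_x$ contributes $t^{-1/2}$ and the cubic term carries three factors of $t^{-1/4}$. Collecting all terms, the common factor $t^{-5/4}$ cancels and \eqref{eq:15} follows immediately. The only point requiring care is that the chain rule must be applied consistently and that $\tfrac{d}{dy}(|Q|^2 Q)$ is kept in divergence form rather than expanded prematurely.

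Next I would pass from the complex equation \eqref{eq:15} to the real system by writing $Q = A e^{i\phi}$ and separating real and imaginary parts. Substituting, one has $Q' = (A' + iA\phi')e^{i\phi}$ and $Q'' = (A'' - A(\phi')^2 + i(2A'\phi' + A\phi''))e^{i\phi}$, while $|Q|^2 Q = A^3 e^{i\phi}$ gives $\tfrac{d}{dy}(A^3 e^{i\phi}) = (3A^2 A' + iA^3\phi')e^{i\phi}$. Dividing \eqref{eq:15} through by $e^{i\phi}$ and splitting into real and imaginary parts yields two coupled ODEs. The imaginary part will contain the first-order transport structure $-\tfrac{1}{4}A - \tfrac{1}{2}yA'$ together with $2A'\phi'$ and $3A^2 A'$; the key observation is that this equation can be organized as a total derivative, which after one integration (using the initial condition $\phi'(0) = -\tfrac{3}{4}A(0)^2$ to fix the constant of integration) collapses to the algebraic relation \eqref{eq:16}, namely $\phi' = \tfrac{y}{4} - \tfrac{3}{4}A^2$. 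I expect this integration step to be the main obstacle: one must identify the correct integrating combination and verify that the boundary term vanishes precisely because of the prescribed value of $\phi'(0)$, which is why that hypothesis appears in the statement.

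Finally, with \eqref{eq:16} established, I would substitute $\phi' = \tfrac{y}{4} - \tfrac{3}{4}A^2$ into the real part of \eqref{eq:15}. The real part reads $A'' - A(\phi')^2 + 3A^2 A'\cdot 0 - \tfrac{1}{2}yA\phi'\cdots$; more precisely, collecting the real contributions gives $A'' - A(\phi')^2 + (\tfrac{1}{2}y + A^2\cdot(\text{coeff}))A\phi'$-type terms. Inserting the explicit expression for $\phi'$ and expanding $(\phi')^2 = \tfrac{y^2}{16} - \tfrac{3}{8}yA^2 + \tfrac{9}{16}A^4$, the $\tfrac{y^2}{16}A$ term moves to the left-hand side and the remaining cubic and quintic terms reorganize into the right-hand side $\tfrac{y}{4}A^3 - \tfrac{3}{16}A^5$ of \eqref{eq:17}. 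This last step is purely algebraic once \eqref{eq:16} is in hand, so the substantive content of the proposition is entirely contained in the integration producing \eqref{eq:16}. I would therefore present the derivation of \eqref{eq:15} and \eqref{eq:17} compactly and devote the detailed exposition to verifying the integrating-factor argument for \eqref{eq:16}.
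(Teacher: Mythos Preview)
Your plan is correct and matches the paper's own proof essentially step for step: substitute the self-similar ansatz to get \eqref{eq:15}, take the imaginary part of $e^{-i\phi}\cdot\eqref{eq:15}$ and recognize it (after multiplying by $A$, which is the concrete integrating factor you anticipate) as the exact identity $\frac{d}{dy}(A^2\phi') - \frac{1}{4}\frac{d}{dy}(yA^2) = -\frac{3}{4}\frac{d}{dy}A^4$, integrate using $\phi'(0)=-\tfrac{3}{4}A(0)^2$ to obtain \eqref{eq:16}, and then substitute \eqref{eq:16} into the real part to get \eqref{eq:17}. The only point to tidy in your write-up is the real-part computation, where the correct form is $A'' - A(\phi')^2 + \tfrac{y}{2}A\phi' = A^3\phi'$ before substituting \eqref{eq:16}.
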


Proposition \ref{Proposition:1.3} implies that
Theorem \ref{Theorem:1.2} is now reduced
to the global dynamics of solutions to \eqref{eq:17}.
Indeed, by \eqref{eq:16},
$Q$ takes the form
	\[
	Q(y)
	= A(y)
	\exp \bigg(i \phi(0) + i \frac{y^2}{8} - i \frac 3 4 \int_0^y A(y')^2 dy' \bigg).
	\]
Here, $X$ is again a natural space to construct solutions to \eqref{eq:17}
because $X$ is a collection of functions
decaying like solutions to the linearized problem of \eqref{eq:17}.

The local solvability of \eqref{eq:17} is easily seen
because solutions to \eqref{eq:17} may satisfy the integral form
	\begin{align}
	A(y)
	=A_0 + A_1 y
	- \int_0^y (y-y')
	\bigg(
	\frac{y'^2}{16} A(y') - \frac {y'} 4 A(y')^3
	+ \frac{3}{16} A(y')^5 \bigg) \thinspace dy'.
	\label{eq:18}
	\end{align}
Indeed, local solutions to \eqref{eq:18} are constructed by
the standard contraction argument.
Moreover, the classical Cauchy - Kovalevskaya theorem for ODE implies
that the classical local solutions to \eqref{eq:17} are analytic in a neighborhood of $0.$
For details, we refer the reader to Theorem 2.2.1 in \cite{H2}.

Since ODEs \eqref{eq:17} and \eqref{eq:7} are similar,
one may regard nonlinear parts of \eqref{eq:17} as perturbation
and expect that solutions to \eqref{eq:17} take the form of \eqref{eq:10} asymptotically
at least for small data.
However, the cubic nonlinearity of \eqref{eq:17}
produces a strong nonlinear effect and manifests as a non-negligible perturbation.
For $y > 0$, by rewriting $A(y) = y^{-1/2} B(y^2/8)$,
\eqref{eq:17} is deformed as
	\begin{align}
	B''(\eta) + B(\eta)
	=
	\frac 1 {2\eta} B(\eta)^3
	- \frac 3 {16 \eta^{2}} B(\eta)
	- \frac{3}{64 \eta^2} B(\eta)^5.
	\label{eq:19}
	\end{align}
By the Duhamel principle,
\eqref{eq:19} is rewritten as
	\begin{align}
	B(\eta)
	&=
	B_0 \cos(\eta)
	+ B_1 \sin(\eta)
	\nonumber\\
	&+ \int_0^\eta \sin(\eta-\eta')
	\bigg(
	\frac 1 {2\eta'} B(\eta')^3
	- \frac 3 {16 \eta'^{2}} B(\eta')
	- \frac{3}{64 \eta'^2} B(\eta')^5
	\bigg) d\eta',
	\label{eq:20}
	\end{align}
where $B_0 = A(0)$ and $B_1=A'(0)$.
If the nonlinearity is negligible,
one may construct $B$ by the standard iteration argument.
On the other hand,
if $A$ behaves like \eqref{eq:10} asymptotically,
$B$ behaves like $C_B \sin(\eta+\omega_B)$
with some constants $C_B$ and $\omega_B$.
Then by a direct computation, we see that
	\begin{align}
	\bigg\| \int_0^\cdot \sin(\cdot-\eta') \sin(\eta'+\omega_B)^3 dy'
	\bigg\|_{L^\infty(\mathbb R)}
	= \infty.
	\label{eq:21}
	\end{align}
Therefore, it is impossible to construct $A$ in $X$
by the standard iteration argument starting from a function of $X$ generally.
In addition, this computation also shows that
$A$ cannot behave as solutions to \eqref{eq:9} asymptotically.
Otherwise, \eqref{eq:20} loses its sense on $\mathbb R$.
Here, we also remark that the asymptotic analysis of \cite{KW94}
relies on the integrability of the perturbation part of the ODE \eqref{eq:8}.
Therefore, \eqref{eq:21} also implies that
we need more careful asymptotic analysis for \eqref{eq:17}.

Moreover, solutions to \eqref{eq:17} are not symmetric, namely, not radial.
In the case where $y < 0$,
we can regard \eqref{eq:17} as a relation between second derivative of solutions
and a collection of positive potentials.
So, as we see Lemma \ref{Lemma:4.1} below,
solutions have a priori bounds for $y < 0$ with arbitrary initial data.
On the other hand,
in the case where $y > 0$,
the cubic nonlinearity is regarded as a negative potential.
This means that the cubic nonlinearity may enlarge solutions arbitrarily.
Indeed, according to some numerical experiments,
solutions to \eqref{eq:17} seem to be unbounded
when initial data is not small.
Therefore we expect only small data global existence for \eqref{eq:17}.
We prove:

\begin{Proposition}
\label{Proposition:1.4}
There is a small positive number $\varepsilon_0$
such that for any $0 < A_0 < \varepsilon_0$,
\eqref{eq:17} possesses a unique solution $A \in X$
with $A(0) = A_0$ and $A'(0) = 0$.
Particularly,
with some constants $Q_{\pm}$ and $\omega_{\pm,0}$,
	\begin{align}
	A(y)
	=
	\begin{cases}
	Q_{+} |y|^{-1/2}
	\cos (\omega_{+,0} - \frac{y^2}{8}
	+ \frac{3}{8} Q_+^2 \log(\frac{y^2}{8}) )
	+ O(y^{-1}),
	&\mathrm{if} \quad y \gg 1,\\
	Q_{-} |y|^{-1/2}
	\cos (\omega_{-,0} - \frac{y^2}{8}
	- \frac{3}{8} Q_-^2 \log (\frac{y^2}{8} ) )
	+ O(y^{-1}),
	&\mathrm{if} \quad y \ll -1.
	\end{cases}
	\label{eq:22}
	\end{align}
\end{Proposition}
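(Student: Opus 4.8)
The plan is to build $A$ on the two half-lines $y\le 0$ and $y\ge 0$ out of the analytic local solution of \eqref{eq:18} near the origin, and then to read off \eqref{eq:22} from the reduced equation \eqref{eq:19}. The negative half-line is the benign direction: as already noted, for $y<0$ the term $\tfrac y4 A^3$ acts as a positive potential, so Lemma~\ref{Lemma:4.1} supplies an a priori bound on $(A,A')$ and the local solution extends to all of $(-\infty,0]$ for every $A_0$. Smallness of $A_0$ will be needed only on the positive half-line, where $\tfrac y4 A^3$ is a negative potential and can, a priori, amplify the solution without bound.

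For $y>0$ I pass to $B(\eta)=y^{1/2}A(y)$ with $\eta=y^2/8$, which solves \eqref{eq:19}, and I control the amplitude $R(\eta)=\bigl(B(\eta)^2+B'(\eta)^2\bigr)^{1/2}$. Writing $N$ for the right-hand side of \eqref{eq:19}, one has $\tfrac12(R^2)'=B'(B''+B)=B'N$. The only dangerous term is the cubic one, $\tfrac1{2\eta}B^3B'=\tfrac1{8\eta}\tfrac{d}{d\eta}B^4$; integrating it by parts turns it into a bounded boundary term $\tfrac{B^4}{8\eta}$ plus the integrable remainder $\tfrac{B^4}{8\eta^2}$, while the remaining contributions to $B'N$ are already $O(\eta^{-2})$. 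Hence, for $R(\eta_0)$ small, a continuity (bootstrap) argument closes and shows that $R$ stays small on $[\eta_0,\infty)$. This yields global existence for $y>0$ and, through $|A|=|y|^{-1/2}|B|\le |y|^{-1/2}R$ together with the companion bound for $A'$, gives $A\in X$. The same boundary-term computation shows that the increments of $R^2$ are summable, so $R(\eta)$ converges to a limit $Q_+:=R(\infty)$ with $R(\eta)=Q_++O(\eta^{-1})$.

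It remains to extract the phase. In Lagrange variables $B=R\cos\theta$, $B'=-R\sin\theta$, one finds $R'=-N\sin\theta$ and $\theta'=1-R^{-1}N\cos\theta$. Only the resonant cubic part of $N$ contributes a secular effect: since $N=\tfrac{R^3}{2\eta}\cos^3\theta+O(\eta^{-2})$, the mean over a period of $\cos^3\theta\,\sin\theta$ vanishes (consistent with the convergence of $R$), whereas the mean of $\cos^4\theta$ equals $\tfrac38\neq0$. Thus $\theta'=1-\tfrac{R^2}{2\eta}\cos^4\theta+O(\eta^{-2})$, whose mean part integrates to a logarithmic phase proportional to $Q_+^2\log\eta$, the oscillatory and $O(\eta^{-2})$ remainders integrating to a convergent constant. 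Undoing the substitution then produces \eqref{eq:22} for $y\gg1$, with the total error absorbed into $O(y^{-1})$; this is precisely the logarithmic correction advertised in the introduction, and it originates entirely in the nonlinear resonance rather than in the linear part. The regime $y\ll-1$ is treated identically after observing that $\tfrac y4A^3$ changes sign, which flips the sign of the secular phase and yields the constant $Q_-$ in \eqref{eq:22}. Uniqueness is inherited from that of \eqref{eq:18} together with the global bounds just established.

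The main obstacle is exactly the resonance flagged by \eqref{eq:21}: a naive iteration on \eqref{eq:20} diverges because $\cos^3\theta$ contains a $\tfrac34\cos\theta$ component in exact resonance with $\partial_\eta^2+1$. The role of the amplitude/phase splitting is to quarantine this resonance. The projection onto $\sin\theta$, which governs $R$, sees $\cos^3\theta\,\sin\theta$; this has zero mean and, weighted by $\eta^{-1}$, is rendered integrable by a single integration by parts, so the amplitude is controlled and converges. The projection onto $\cos\theta$, which governs $\theta$, sees the nonzero mean of $\cos^4\theta$ and thereby generates the logarithmic phase. The delicate points are making the bootstrap constants explicit enough to run the continuity argument uniformly on $[\eta_0,\infty)$, and verifying that the oscillatory remainders are genuinely integrable uniformly in the small amplitude.
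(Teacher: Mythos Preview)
Your proposal is correct and follows essentially the same strategy as the paper: transform to $B(\eta)=|y|^{1/2}A(y)$, introduce polar variables $(R,\theta)$, identify the cubic resonance $\cos^4\theta$ as the sole source of a non-integrable phase contribution, and dispose of the oscillatory remainders by a single integration by parts (the paper carries this out explicitly at the end of Section~\ref{section:4}).

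The one genuine methodological difference is in the a~priori control for $y>0$. The paper stays in the $A$-variable and uses two successive modified energies $E_3$ and $E_4$ (Proposition~\ref{Proposition:4.3}) to obtain $|A|\lesssim\varepsilon\langle y\rangle^{-1/2}$ and $|A'|\lesssim\varepsilon\langle y\rangle^{1/2}$ directly, and only afterwards passes to $B$ to read off the asymptotics. You instead run the bootstrap on $R^2=B^2+B'^2$ in the $B$-variable from the outset, which unifies the boundedness and convergence of $R$ into a single computation. This is slightly more economical, but it forces you to absorb the singularity of the change of variables at $\eta=0$: you must first propagate the analytic local solution from $y=0$ out to some fixed $y_0>0$ with $|A(y_0)|+|A'(y_0)|\lesssim\varepsilon$ (the paper's Step~1), so that $R(\eta_0)$ is indeed small when you begin the bootstrap. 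You allude to this but do not spell it out; make sure to record that the local existence interval is bounded below uniformly as $\varepsilon\to0$. Similarly, for $y<0$ note that Lemma~\ref{Lemma:4.1} by itself gives only boundedness of $A$, not membership in $X$; the decay $|A|\lesssim|y|^{-1/2}$ still has to come from your $R$-analysis (or from the paper's $E_1,E_2$ in Proposition~\ref{Proposition:4.2}).
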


In order to prove \eqref{eq:22},
we look for real valued functions $R_\pm$ and $\omega_\pm$
satisfying
	\[
	\begin{cases}
	(R_+(\eta) \cos(\omega_+(\eta)), R_+(\eta) \sin(\omega_+(\eta)))
	= (|y|^{1/2}A(y),(|y|^{1/2}A(y))^\prime),
	& \mathrm{for} \quad y > 1,\\
	(R_-(\eta) \cos(\omega_-(\eta)), R_-(\eta) \sin(\omega_-(\eta)))
	= (|y|^{1/2}A(y),(|y|^{1/2}A(y))^\prime),
	& \mathrm{for} \quad y < -1,\\
	\end{cases}
	\]
with $\eta = y^2/8$.
Then, one can show that the phase function may satisfy
	\begin{align}
	\omega_\pm'(\eta)
	&= -1 \pm \frac{Q_{\pm}^2 \cos(\omega_\pm(\eta))^4}{\eta} + O(\eta^{-3/2})
	\nonumber\\
	&= -1 \pm \frac{3 Q_{\pm}^2}{8 \eta}
	\pm \frac{ Q_{\pm}^2}{8 \eta}
	\big( 4 \cos(2\omega_\pm(\eta)) + \cos(4\omega_\pm(\eta)) \big)
	+ O(\eta^{-3/2})
	\label{eq:23}
	\end{align}
for $\eta > 1$.
Again, the second and third terms on the RHS
of the last equality of \eqref{eq:23} is not absolutely integrable
on $\mathbb R$.
In this paper, we prove that
we get logarithmic phase correction from the second term
on the RHS of the last equality of \eqref{eq:23}
and the remainder is under control.

Here, we also remark that
Cazenave and Weissler \cite{CW98} studied another type of self-similar solutions
to the Cauchy problem \eqref{eq:4}.
In particular, they consider the solution propagating from the initial data
$u_0(x) = |x|^{-n/2}$.
Since \eqref{eq:4} and this initial data are invariant
under the scaling transformations
$u \mapsto T_{n,\lambda}(u)$ and
$u_0 \mapsto T_{n,\lambda}^0(u_0)$ defined by
	\[
	T_{n,\lambda} (u)(t,x)
	= \lambda^{n/2} u(\lambda^2 t, \lambda x),\quad
	T_{n,\lambda}^0 (u_0)
	= \lambda^{n/2} u_0(\lambda \cdot)
	\]
for $\lambda > 0$, respectively,
if the solution is unique,
by substituting $\lambda = t^{-1/2}$,
solutions take the form of \eqref{eq:3}.
Since the Cauchy problem \eqref{eq:1} is also invariant
under the scaling transformation $T_{1,\lambda}$,
one may expect self-similar solutions constructed similarly.
However,
this type of self-similar solutions is outside of the scope of this paper.

This paper is organized as follows:
In Section \ref{section:2},
we show Proposition \ref{Proposition:1.3}.
In Section \ref{section:3},
we review the proof of Proposition \ref{Proposition:1.1}.
In Section \ref{section:4},
we show Proposition \ref{Proposition:1.4}.
Particularly,
we, at first, show local existence of solutions by the standard contraction argument.
Then a priori bounds of solutions to \eqref{eq:17} are obtained
by introducing some modified energies.
We remark that, since blow-up alternative holds for \eqref{eq:17},
the a priori bounds imply the global existence of solutions to \eqref{eq:17}.
At last, we show asymptotic behavior of solutions to \eqref{eq:17}.

\section{Some ODEs for Self-similar Solutions}
\label{section:2}
Here we prove Proposition \ref{Proposition:1.3}.
By substituting \eqref{eq:3},
\eqref{eq:1} is rewritten as
	\begin{align*}
	& - \frac i 4 t^{-5/4} Q(t^{-1/2} x)
	- \frac i 2 t^{-5/4} t^{-1/2} x Q'(t^{-1/2} x)
	+ t^{-5/4} Q''(t^{-1/2} x)\\
	&= - i t^{-3/4} \partial_x (|Q(t^{-1/2}x)|^{2} Q(t^{-1/2}x) )\\
	&= - i t^{-5/4}(|Q|^2 Q)'(t^{-1/2}x).
	\end{align*}
By multiplying $t^{5/4}$ on both hand sides and rewriting $t^{-1/2}x$ as $y$,
we obtain \eqref{eq:15}.
Substituting $Q = A e^{i \phi}$ in \eqref{eq:15},
multiplying $e^{-i \phi}$ on both hand sides of \eqref{eq:15},
taking the imaginary part of the resulting equation,
and multiplying $A$,
we obtain
	\begin{align*}
	& 2 A(y) A'(y) \phi'(y) + A(y)^2 \phi''(y)
	- \frac 1 4 A(y)^2 - \frac y 2 A(y)A'(y)\\
	&=
	\frac{d}{dy} ( A(y)^2 \phi'(y) )
	- \frac 1 4 \frac{d}{dy} ( y A(y)^2)\\
	&= - \frac 3 4 \frac{d}{dy} A(y)^4.
	\end{align*}
Therefore,
	\[
	A(y)^2 \phi'(y)
	- \frac y 4 A(y)^2
	= - \frac 3 4 A(y)^4,
	\]
which together with the assumption $\phi'(0) = 3 A(0)^2/4$ implies \eqref{eq:16}.
Moreover,
multiplying $e^{-i \phi}$ on both hand sides of \eqref{eq:15} and
taking the real part of the resulting equation,
we have
	\begin{align}
	A''(y) - A(y) \phi'(y)^2 + \frac y 2 A(y) \phi'(y)
	= A(y)^3 \phi'(y).
	\label{eq:24}
	\end{align}
By \eqref{eq:16}, the LHS of \eqref{eq:24} is rewritten as
	\begin{align*}
	&A''(y) + \frac y 2  A(y) \phi'(y) - A(y) \phi'(y)^2\\
	&= A''(y) + \frac{y^2}{8} A(y) - \frac{3y}{8} A(y)^3
	- \frac{y^2}{16} A(y) + \frac{3y}{8} A(y)^3 - \frac{9}{16} A(y)^5,
	\end{align*}
while the RHS is rewritten as
	\[
	\frac y 4 A(y)^3 - \frac 3 4 A(y)^5.
	\]
This implies \eqref{eq:17}.

\section{Solutions for Linearized Problem}
\label{section:3}
\begin{proof}[Proof of Proposition \ref{Proposition:1.1}]
Proposition \ref{Proposition:1.1} is a conclusion of \cite[8.491 7]{T}.
But for reader's convenience, we show the proof.
In order to show \eqref{eq:11},
we see only the case where $y>0$,
because \eqref{eq:9} is symmetric
under reflection and one can consider even and odd extensions.
Then solutions to \eqref{eq:9} for $y > 0$ are given
by a linear combination of
	\[
	y^{1/2} J_{\pm 1/4} ( \frac{y^2}{8}).
	\]
For details, see \cite[8.491 7]{T}.
$J_{\nu}$ has the following expansion:
	\[
	J_{\nu}(z)
	=\bigg(\frac{z}{2}\bigg)^{\nu}
	\sum_{k=0}^{\infty}\frac{(-1)^{k}}{k!\Gamma(k+\nu+1)}\bigg(\frac{z}{2}\bigg)^{k},
	\qquad \mathrm{for} \quad |\mathrm{Arg} z| < \pi.
	\]
For details, see \cite[8.440]{T}.
Therefore, $G_{\mathrm{even}}$ and $G_{\mathrm{odd}}$ arise naturally
as even and odd solutions to \eqref{eq:9}, respectively.
Moreover, they are analytic and have expansions
	\[
	G_{\mathrm{even}}(y)
	=\sum_{k=0}^{\infty}\frac{(-1)^{k} 2^{-4k} \Gamma(3/4)}{k!\Gamma(k+3/4)} y^{2k},
	\quad
	G_{\mathrm{odd}}(y)
	=\sum_{k=0}^{\infty}\frac{(-1)^{k}2^{-4k} \Gamma(5/4)}{k!\Gamma(k+5/4)} y^{2k+1}.
	\]
$J_{\nu}$ has an asymptotic behavior
	\begin{align}
	J_{\nu}(\eta)
	= \sqrt{\frac{2}{\pi}} \eta^{-1/2}\cos\bigg(\eta-\frac{\pi}{2}\nu-\frac{\pi}{4}\bigg)
	+ O(\eta^{-1})
	\label{eq:25}
	\end{align}
as $\eta \to \infty$.
See \cite[8.451 1]{T}.
Therefore, \eqref{eq:12} and \eqref{eq:13} hold.

At last,
it is also known that
	\[
	J_{\nu}'(\eta)
	= \frac{J_{\nu-1}(\eta) - J_{\nu+1}(\eta)}{2}.
	\]
For details, see \cite[8.471 2]{T}.
Combining this and \eqref{eq:25}, the bound
	\[
	\bigg| J_{\nu}'(\frac{y^2}{8}) \bigg| \leq C
	\]
holds when $|y| \gg 1$.
Therefore, this and \eqref{eq:25} imply that
$G_{\mathrm{even}}$ and $G_{\mathrm{odd}}$ belong to $X$.
\end{proof}

\section{Study of Solutions to \eqref{eq:17}}
\label{section:4}
In this section,
we prove Proposition \ref{Proposition:1.4}.
We remark that
a standard contraction argument implies that
for any $(A_0,A_1)$,
there exists $\delta >0$ such that
\eqref{eq:20} posses a unique $C^2$ solution on $(-\delta,\delta)$.

\subsection{A Priori Control of Solutions to \eqref{eq:17}}
In this subsection,
we prove that solutions to \eqref{eq:17} with small initial data belong to $X$.
This a priori control and the standard contraction argument
imply global existence of solutions to \eqref{eq:17} for small initial data.
We divide the proof into two parts:
the cases where $y<0$ and $y>0$.

\subsubsection{The Case $y < 0$}
For simplicity, we put $y$ as $-y$,
namely, consider
	\begin{align}
	\begin{cases}
	\widetilde A''(y)
	= - \Big( \frac{y^2}{16} \widetilde A(y)
	+ \frac{y}{4} \widetilde A^3(y) + \frac{3}{16} \widetilde A^5(y) \Big),
	&\mathrm{if} \quad y > 0,\\
	\widetilde A(0)=A_0,\quad \widetilde A'(0)=A_1,
	\end{cases}
	\label{eq:26}
	\end{align}
where $\widetilde A = A(-\cdot)$.
We, at first,
show a uniform bound of solutions to
second order ODEs with positive potentials.

\begin{Lemma}
\label{Lemma:4.1}
Let $K \geq 0$ and $(V_k)_{k=0}^K$
be a sequence of nonnegative increasing functions on $\lbrack 0, \infty)$.
We assume that there exists $x_0 > 0$ such that
	\[
	V_0(x_0)= \nu > 0.
	\]
Let $V(x,y) = \sum_{k=0}^K V_k(x) y^k$
and let $f \in C^2$ satisfy
	\begin{align}
	\begin{cases}
	f''(x)
	= - V(x, |f(x)|^{2}) \cdot f(x),
	&\qquad x \geq 0,\\
	f(0) = f_0 > 0,
	\qquad f'(0) = 0.
	\end{cases}
	\label{eq:27}
	\end{align}
Then, there exist unbounded increasing sequences
$(m_j)_{j=1}^\infty \subset (0,\infty)$ and
$(n_j)_{j=1}^\infty \subset (0,\infty)$ such that
$m_j$ and $n_j$ are the maximal points of $f^2$ and $(f')^2$,
respectively.
We also put $m_0 = 0$.
Moreover, for any $j \geq 1$,
$m_{j-1} < n_j < m_j$ and
	\begin{align}
	\sum_{k=0}^K \frac{V_k(n_j)}{k+1} |f(m_j)|^{2(k+1)}
	&\leq f'(n_j)^2
	\leq \sum_{k=0}^K \frac{V_k(n_j)}{k+1} |f(m_{j-1})|^{2(k+1)},
	\label{eq:28}\\
	\sum_{k=0}^K \frac{V_k(m_j)}{k+1} |f(m_j)|^{2(k+1)}
	&\geq f'(n_j)^2
	\geq \sum_{k=0}^K \frac{V_k(m_{j-1})}{k+1} |f(m_{j-1})|^{2(k+1)}.
	\label{eq:29}
	\end{align}
\end{Lemma}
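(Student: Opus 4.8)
The plan is to reduce all four inequalities to the monotonicity of a family of \emph{frozen--coefficient} energies along half-periods of the oscillation of $f$. Introduce
\[
\Phi(a,z)=\sum_{k=0}^{K}\frac{V_k(a)}{k+1}\,z^{k+1}=\int_0^z V(a,w)\,dw ,
\]
so that $\partial_z\Phi(a,z)=V(a,z)$, and for each frozen parameter $a\ge 0$ set $E_a(x)=f'(x)^2+\Phi\big(a,f(x)^2\big)$. Using $f''=-V(x,f^2)f$ from \eqref{eq:27} together with $2ff'=(f^2)'$, one computes the single identity
\[
E_a'(x)=(f^2)'(x)\,\big(V(a,f(x)^2)-V(x,f(x)^2)\big).
\]
Since every $V_k$ is nonnegative and increasing, the factor $V(a,f^2)-V(x,f^2)$ has the sign of $a-x$; hence on any interval on which $f^2$ is monotone, $E_a$ is monotone, with a direction fixed by the position of $a$ relative to that interval. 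This identity, needing no differentiability of the $V_k$, is the engine for \eqref{eq:28}--\eqref{eq:29}.

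Before invoking it I would erect the oscillatory skeleton. Writing $q(x)=V(x,f(x)^2)$, the solution obeys the linear equation $f''+qf=0$ with $q(x)\ge V_0(x)\ge\nu>0$ for $x\ge x_0$. Sturm comparison with $g''+\nu g=0$ then forces a zero of $f$ in every subinterval of $[x_0,\infty)$ of length $\pi/\sqrt{\nu}$, so $f$ oscillates with zeros tending to $+\infty$. At a zero of $f$ one has $f''=0$ and $((f')^2)''=-2q(f')^2<0$, so the zeros of $f$ are precisely the local maxima of $(f')^2$; at a zero of $f'$ with $f\ne0$ one has $(f^2)''=-2qf^2<0$, so these are precisely the local maxima of $f^2$. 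A Sturm separation argument (between consecutive zeros of $f$, $f$ keeps one sign and each critical point, where $f''=-qf$ has sign opposite to $f$, is a strict maximum of $f^2$, hence unique) shows the zeros of $f$ and of $f'$ strictly interlace. Starting from $m_0=0$, which is a zero of $f'$ since $f'(0)=0$, this yields the interlacing sequences $m_0<n_1<m_1<n_2<\cdots$ with $f'(m_j)=0$, $f(n_j)=0$, and $f^2$ strictly monotone on each half-period $(m_{j-1},n_j)$ and $(n_j,m_j)$.

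With the skeleton in place the inequalities are immediate, because at the endpoints one of the two terms of $E_a$ vanishes: $E_a(m_j)=\Phi(a,f(m_j)^2)$ and $E_a(n_j)=f'(n_j)^2$. Taking $a=n_j$: on $(m_{j-1},n_j)$ the product $E_{n_j}'$ is $\le0$ (here $x<n_j$ and $(f^2)'\le0$) and on $(n_j,m_j)$ it is again $\le0$ (here $x>n_j$ and $(f^2)'\ge0$), so $E_{n_j}$ is nonincreasing across both, giving $\Phi(n_j,f(m_j)^2)\le f'(n_j)^2\le\Phi(n_j,f(m_{j-1})^2)$, which is \eqref{eq:28}. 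Freezing instead at the left, resp. right, endpoint makes $E_{m_{j-1}}$ nondecreasing on $(m_{j-1},n_j)$ and $E_{m_j}$ nondecreasing on $(n_j,m_j)$, which yields \eqref{eq:29}. Comparing the two halves of \eqref{eq:28} gives $\Phi(n_j,f(m_j)^2)\le\Phi(n_j,f(m_{j-1})^2)$, and monotonicity of $\Phi(n_j,\cdot)$ forces $|f(m_j)|\le|f(m_{j-1})|$; hence $|f|\le f_0$ throughout, and then the equation bounds $f'$ on finite intervals.

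The step I expect to be the real obstacle is the interplay between oscillation and globality: Sturm only produces infinitely many zeros on the maximal interval of existence, while the amplitude bound that rules out finite-time blow-up itself relies on those zeros. I would resolve this circularity by running the whole analysis on the maximal interval $[0,T_{\max})$: the interlacing and the inequalities hold for every complete half-period fitting inside it, the resulting bound $|f|\le f_0$ together with the equation keeps $f$ and $f'$ bounded on $[0,T_{\max})$, so the blow-up alternative forces $T_{\max}=\infty$; oscillation then furnishes infinitely many half-periods and the unbounded increasing sequences $(m_j)$, $(n_j)$. The only genuinely delicate points are this globality argument and the verification of strict interlacing with one-sided monotonicity of $f^2$ on each half-period; once these are secured, the frozen-energy identity does the rest mechanically.
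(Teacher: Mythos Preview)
Your argument is correct and reaches the same conclusions, but the organization differs from the paper's in two visible ways. For the oscillatory skeleton, the paper does not invoke Sturm comparison; it instead builds the intervals $I_j=\{x>m_j:(-1)^jf>0\}$ and $J_j=\{x>n_j:(-1)^jf'>0\}$ by hand and shows each is bounded via a direct integral estimate (e.g.\ if $f>0$ on $[x_0,\infty)$ then $f'(x)\le-\nu\int_{x_0}^{2x_0}f<0$, forcing $f\to-\infty$). Your Sturm route is shorter and more conceptual; theirs is more elementary and avoids the implicit appeal to a comparison theorem for the ``linear'' equation $f''+q(x)f=0$ with $q$ depending on the solution itself. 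For the inequalities, the paper multiplies \eqref{eq:27} by $f'$ and integrates the bare identity $\tfrac12(f'^2)'=-\sum_k\tfrac{V_k(x)}{k+1}(f^{2(k+1)})'$ over each half-period, bounding $V_k(x)$ by its endpoint value using monotonicity of $f^2$; your frozen-coefficient energy $E_a$ is exactly the same computation, packaged so that the four inequalities fall out by choosing $a\in\{m_{j-1},n_j,m_j\}$. Two minor remarks: the lemma as stated already assumes $f\in C^2$ on $[0,\infty)$, so your globality discussion, while sensible, is not required here; and your claim ``$((f')^2)''=-2q(f')^2$ at a zero of $f$'' tacitly uses a third derivative of $f$ that need not exist, though the intended conclusion (local maximum of $(f')^2$) follows cleanly from the sign of $(f'^2)'=-V(x,f^2)(f^2)'$ on either side.
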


\begin{Remark}
\label{Remark:4.1}
If $(V_k)_{k=0}^K$ is a sequence of constants,
\eqref{eq:28} and \eqref{eq:29} are equivalent to
the classical energy conservation
	\[
	\sum_{k=0}^K \frac{V_k(0)}{k+1} |f(m_j)|^{2(k+1)}
	= f'(n_{j'})^2
	\]
for any $j \geq 0$ and $j' \geq 1$.
Moreover,
\eqref{eq:28} implies that
$|f(m_j)|$ decreases as $j \to \infty$,
because $y \mapsto \sum_{k=0}^K \frac{V_k(n_j)}{k+1} y^{2(k+1)}$ is increasing
at least when $n_j \geq x_0$.
Therefore, $|f(x)| < f(x_0)$ for any $x > x_0$.
On the other hand,
\eqref{eq:29} implies that $f'(n_j)^2$ increases as $j \to \infty$.
\end{Remark}

\begin{proof}
At fist, we show the existence of $(m_j)_{j=0}^\infty$ and $(n_j)_{j=1}^\infty$,
i.e. $f$ keeps oscillating.
Let
	\[
	I_0 = \{ x > m_0 = 0 \ ; \ f(\xi) > 0, \ \forall \xi \in (m_0,x) \}.
	\]
Then $I_0 \neq \emptyset$ because of the continuity of $f$.
Moreover $I_0$ is bounded.
Otherwise,
for $x \geq 2 x_0$,
	\begin{align*}
	f'(x)
	&= f'(x_0) - \int_{x_0}^{x} V(x,|f(x)|^2) \cdot f(x) dx\\
	&\leq - \nu \int_{x_0}^{2 x_0} f(x) dx\\
	&\leq - \nu x_0 f(2 x_0) < 0,
	\end{align*}
which implies
	\[
	f(x) = f(x_0) + \int_{x_0}^x f'(y) dy \leq f(x_0) - \nu x_0 f(2 x_0) (x-x_0)
	\to - \infty \quad \mbox{as} \quad x \to \infty.
	\]
This contradicts that $f$ is positive on $I_0$.
Then let $n_1 = \sup I_0$.
Then $f'(n_1) < 0$ because $f'' < 0$ on the interior of $I_0$.
Moreover, since the sign of $f''$ and that of $f$ are the opposite of each other,
$n_1$ is a maximal point of $f'^2$.

Next, let
	\[
	J_1 = \{ x > n_1 \ ; \ f'(\xi) < 0, \ \forall \xi \in (n_1,x) \}.
	\]
By the continuity of $f'$ and $f''$,
$J_1 \neq \emptyset$.
Again $J_1$ is shown to be bounded.
Otherwise, $f(x) < 0$ and $f'(x) <0$ for $x > x_1 = 2 \max(n_1,x_0)$.
Then
	\[
	f''(x) \geq - \nu f(x_1) > 0,
	\]
which implies
	\[
	f'(x) > f'(x_1) + \nu |f(x_1)|(x-x_1)
	\to \infty \quad \mbox{as} \quad x \to \infty.
	\]
This contradicts that $\sup J_1 = \infty$.
Let $m_1 = \sup J_1$.
Then the continuity of $f'$ and \eqref{eq:27} imply
that $f'(m_1)=0$ and $f''(m_1) > 0$, respectively.
Therefore $m_1$ is a maximal point of $f^2$.

Next, let
	\[
	I_1 = \{ x > m_1 \ ; f(\xi) < 0, \ \forall \xi \in (m_1,x) \}.
	\]
Then $\widetilde f = -f(\cdot+m_1)$ satisfies that
$\widetilde f(0) = - f(m_1) > 0$ and $\widetilde f'(0) = - f'(m_1) = 0$.
Moreover, $\widetilde f$ satisfies \eqref{eq:27} with
$\widetilde V(x,y) = V(x+m_1,y)$,
where $\widetilde V_k = V_k(\cdot+m_1)$ is nonnegative increasing
for any $k \geq 0$ and $\widetilde V_{0} (x_0) \geq V_{0} (x_0) > 0$.
Therefore, the boundedness $I_1$ is shown by the same argument as that for $I_0$.
Then let $n_2 = \sup I_1$.
Similarly, let
	\[
	J_2 = \{ x > n_2 \ ; \ f'(\xi) > 0, \ \forall \xi \in (n_2,x) \}.
	\]
Then, $J_2$ is shown to be bounded in the same manner as $J_1$.

By repeating this argument,
we can define a bounded interval $I_j$ and $J_j$ by
	\begin{align*}
	I_j &= \{ x > m_j \ ; \ (-1)^j f(\xi) > 0, \ \forall \xi \in (m_{j},x) \},\\
	J_j &= \{ x > n_j \ ; \ (-1)^{j} f'(\xi) > 0, \ \forall \xi \in (n_j,x) \},
	\end{align*}
where $n_{j+1} = \sup I_{j}$ and $m_j = \sup J_j$ for $j \geq 2$,
recursively and respectively.
Obviously $m_j < n_{j+1} < m_{j+1}$ for any $j \geq 0$.
Therefore, if $(m_j)_{j=0}^\infty$ converges,
then so does $(n_j)_{j=1}^\infty$.
In this case, let $x_2 = \lim_{j \to \infty} m_j$.
Then the continuity of $f$ implies that $f(x_2) = f'(x_2) = 0$.
This and the contraction argument imply that $f \equiv 0$.
This is a contradiction
and therefore $(m_j)_{j=0}^\infty$ and $(n_j)_{j=1}^\infty$ are unbounded.

Now, we prove \eqref{eq:28}.
By multiplying $f'$ on both sides of \eqref{eq:27},
	\[
	\frac 1 2 \frac{d}{dx} (f'^2)(x)
	= - V(x,f(x)^2) \frac 1 2 \frac{d}{dx} f(x)^2
	= - \frac 1 2 \sum_{k=0}^K \frac{V_k(x)}{k+1} \frac{d}{dx} f(x)^{2(k+1)}.
	\]
We fix $j \geq 1$.
Since $f^2$ is decreasing on $I_j = (m_{j},n_{j+1})$,
	\begin{align*}
	\frac 1 2 f'(n_j)^2
	&= \frac 1 2 f'(n_j)^2 - \frac 1 2 f'(m_{j-1})^2\\
	&= - \frac 1 2
	\sum_{k=0}^K \int_{m_{j-1}}^{n_j} \frac{V_k(x)}{k+1} \frac{d}{dx} f(x)^{2(k+1)} dx\\
	&\leq \frac 1 2
	\sum_{k=0}^K \frac{V_k(n_j)}{k+1}
	\bigg| \int_{m_{j-1}}^{n_j} \frac{d}{dx} f(x)^{2(k+1)} dx \bigg|\\
	&\leq \frac 1 2
	\sum_{k=0}^K \frac{V_k(n_j)}{k+1} f(m_{j-1})^{2(k+1)}.
	\end{align*}
Moreover, since $|f|$ is increasing on $J_j = (n_j,m_j)$,
	\begin{align*}
	- \frac 1 2 f'(n_j)^2
	&= \frac 1 2 f'(m_j)^2 - \frac 1 2 f'(n_{j})^2\\
	&= - \frac 1 2
	\sum_{k=0}^K \int_{n_{j}}^{m_j} \frac{V_k(x)}{k+1} \frac{d}{dx} f(x)^{2(k+1)} dx\\
	&\leq - \frac 1 2
	\sum_{k=0}^K \frac{V_k(n_j)}{k+1}
	\bigg| \int_{n_{j}}^{m_j} \frac{d}{dx} f(x)^{2(k+1)} dx \bigg|\\
	&\leq - \frac 1 2
	\sum_{k=0}^K \frac{V_k(n_j)}{k+1} f(m_{j})^{2(k+1)}.
	\end{align*}
Combining these two estimates, we obtain \eqref{eq:28}.
The inequalities \eqref{eq:29} are shown similarly so we omit the proof.
\end{proof}


\begin{Proposition}
\label{Proposition:4.2}
Let $\widetilde A$ be a solution to \eqref{eq:26}.
Then, for any $y > n_1 > 0$,
	\begin{align*}
	&\frac{1}{2y} \widetilde A'(y)^2 + \frac{y}{32} \widetilde A(y)^2\\
	&\leq \frac{n_1}{32} A_0^2 + \frac{1}{16} A_0^4 + \frac{1}{32 n_1} A_0^6
	+ \frac{1}{2 n_1} A_0
	\bigg( \frac{1}{16} A_0^2 + \frac{1}{8 n_1} A_0^4  + \frac{1}{16 n_1^{2}} A_0^6
	\bigg)^{1/2},
	\end{align*}
where $n_{1}$ is the minimal positive zero of $\widetilde A$.
\end{Proposition}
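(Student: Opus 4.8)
The plan is to control the scale-invariant energy
\[
\mathcal{E}(y) = \frac{1}{2y}\widetilde A'(y)^2 + \frac{y}{32}\widetilde A(y)^2,
\]
which is exactly the quantity to be estimated and which plays the role of the adiabatic action of the oscillator \eqref{eq:26}. First I would record what Lemma \ref{Lemma:4.1} gives when applied to $f=\widetilde A$ with $V_0(x)=x^2/16$, $V_1(x)=x/4$, $V_2(x)=3/16$ (which are nonnegative, nondecreasing, and satisfy $V_0(x_0)>0$ for every $x_0>0$): $\widetilde A$ oscillates, $n_1$ is its first positive zero, the amplitude is nonincreasing so that $|\widetilde A(y)|\le A_0$ for all $y\ge 0$ by Remark \ref{Remark:4.1}, and \eqref{eq:28} with $j=1$ yields
\[
\widetilde A'(n_1)^2 \le \frac{n_1^2}{16}A_0^2 + \frac{n_1}{8}A_0^4 + \frac{1}{16}A_0^6 .
\]
Since $\widetilde A(n_1)=0$ we have $\mathcal{E}(n_1)=\widetilde A'(n_1)^2/(2n_1)$, so this bound reproduces precisely the first three terms of the right-hand side. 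It thus remains to control the growth of $\mathcal{E}$ on $(n_1,\infty)$.

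Differentiating $\mathcal{E}$ along \eqref{eq:26}, the cross terms $\pm\frac{y}{16}\widetilde A\widetilde A'$ cancel and the cubic and quintic contributions assemble into total derivatives, leaving
\[
\mathcal{E}'(y) = -\frac{\widetilde A'^2}{2y^2} + \frac{\widetilde A^2}{32} - \frac{1}{16}\frac{d}{dy}\widetilde A^4 - \frac{1}{32y}\frac{d}{dy}\widetilde A^6 .
\]
The only term without a favourable sign is the non-adiabatic defect $\frac{\widetilde A^2}{32}-\frac{\widetilde A'^2}{2y^2}$. The key step is the virial identity $\frac{d}{dy}(\widetilde A\widetilde A')=\widetilde A'^2+\widetilde A\widetilde A''$, which together with \eqref{eq:26} rewrites this defect as $-\frac{1}{2y^2}\frac{d}{dy}(\widetilde A\widetilde A')$ minus manifestly negative quantities. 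Absorbing all resulting total derivatives into a correction, I would set
\[
\mathcal{H}(y) = \mathcal{E}(y) + \frac{\widetilde A\widetilde A'}{2y^2} + \frac{\widetilde A^4}{16} + \frac{\widetilde A^6}{32y} + \frac{\widetilde A^2}{2y^3},
\]
and verify by direct computation that $\mathcal{H}'(y)\le 0$. Because every added term vanishes at $n_1$, this gives $\mathcal{H}(y)\le\mathcal{H}(n_1)=\mathcal{E}(n_1)$ for $y\ge n_1$; as the added terms are nonnegative, this reduces the problem to the boundary correction, $\mathcal{E}(y)\le\mathcal{E}(n_1)+\frac{|\widetilde A(y)|\,|\widetilde A'(y)|}{2y^2}$.

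To estimate this correction I would use $|\widetilde A(y)|\le A_0$ and extract a bound on $\widetilde A'$ from $\mathcal{H}\le\mathcal{E}(n_1)$ itself: discarding the nonnegative terms $\frac{y}{32}\widetilde A^2$, $\frac{\widetilde A^4}{16}$, $\frac{\widetilde A^6}{32y}$, $\frac{\widetilde A^2}{2y^3}$ and completing the square in $\widetilde A'$ gives $(\widetilde A'+\frac{\widetilde A}{2y})^2\le 2y\,\mathcal{E}(n_1)+\frac{A_0^2}{4y^2}$, hence $|\widetilde A'(y)|\le\sqrt{2y\,\mathcal{E}(n_1)}+\frac{A_0}{y}$. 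Inserting this, using $y\ge n_1$ and $\sqrt{2\mathcal{E}(n_1)}=|\widetilde A'(n_1)|/\sqrt{n_1}$, the leading part of the correction is $\frac{A_0|\widetilde A'(n_1)|}{2n_1^2}$, and substituting the bound for $\widetilde A'(n_1)^2$ and factoring $\frac{n_1^2}{16}A_0^2+\frac{n_1}{8}A_0^4+\frac1{16}A_0^6=n_1^2\left(\frac1{16}A_0^2+\frac1{8n_1}A_0^4+\frac1{16n_1^2}A_0^6\right)$ turns this into
\[
\frac{1}{2n_1}A_0\left(\frac{1}{16}A_0^2+\frac{1}{8n_1}A_0^4+\frac{1}{16n_1^2}A_0^6\right)^{1/2},
\]
which is exactly the fourth term. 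The residual $O(A_0^2/n_1^3)$ coming from the $A_0/y$ piece is of lower order and is absorbed by the negative terms discarded in passing from $\mathcal{H}$ to $\mathcal{E}$.

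The step I expect to be the main obstacle is the treatment of the non-adiabatic defect $\frac{\widetilde A^2}{32}-\frac{\widetilde A'^2}{2y^2}$ together with the cross correction, both of which are genuinely non-negligible: as the discussion around \eqref{eq:21} stresses, $\widetilde A'$ grows like $y^{1/2}$ and $\int\widetilde A^2$ diverges, so neither can be bounded termwise. The virial identity is what converts the defect into a controlled boundary term, while the $y^{-2}$ weight combined with the monotone amplitude from Lemma \ref{Lemma:4.1} keeps the correction finite. The delicate point is to arrange the argument so that $\mathcal{H}\le\mathcal{E}(n_1)$ is established \emph{before} $|\widetilde A'|$ is estimated, thereby avoiding a circular use of the energy.
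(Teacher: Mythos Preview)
Your approach is essentially the paper's: your $\mathcal{H}$ is exactly the modified energy $E_2(y)=\frac1y E_1(y)+\frac{1}{2y^2}\widetilde A\widetilde A'+\frac{1}{2y^3}\widetilde A^2$ that the paper introduces in its Step~3, and your computation $\mathcal{H}'\le 0$ reproduces the paper's identity $E_2'=-\frac{\widetilde A^4}{8y}-\frac{\widetilde A^6}{8y^2}-\frac{3\widetilde A^2}{2y^4}$. The use of Lemma~\ref{Lemma:4.1} and Remark~\ref{Remark:4.1} to bound $|\widetilde A|\le A_0$ and $\widetilde A'(n_1)^2$ is also identical to the paper's Step~1.

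The one substantive difference is how the cross term $\frac{|\widetilde A\,\widetilde A'|}{2y^2}$ is estimated. The paper inserts an intermediate step (its Step~2) with the simpler energy $E_1(y)=\frac12\widetilde A'^2+\frac{y^2}{32}\widetilde A^2+\frac{y}{16}\widetilde A^4+\frac{1}{32}\widetilde A^6$, for which $\frac{d}{dy}\big(y^{-2}E_1\big)\le 0$; this yields the clean bound $|\widetilde A'(y)|\le y\,\bigl(\tfrac1{16}A_0^2+\tfrac1{8n_1}A_0^4+\tfrac1{16n_1^2}A_0^6\bigr)^{1/2}$, and substituting it into the cross term gives exactly the fourth term of the Proposition with no remainder. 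You instead try to extract a bound on $\widetilde A'$ from the monotonicity of $\mathcal{H}$ itself, obtaining $|\widetilde A'|\le \sqrt{2y\,\mathcal{E}(n_1)}+A_0/y$. This is perfectly legitimate and even sharper in $y$, but it leaves a residual $|\widetilde A(y)|A_0/(2y^3)$. Your claim that this residual ``is absorbed by the negative terms discarded in passing from $\mathcal{H}$ to $\mathcal{E}$'' is not correct as stated: the only matching discarded term is $-\widetilde A(y)^2/(2y^3)$, and since $|\widetilde A(y)|$ can take any value in $[0,A_0]$, the difference $\bigl(|\widetilde A|A_0-\widetilde A^2\bigr)/(2y^3)$ is in general strictly positive (e.g.\ equal to $A_0^2/(8y^3)$ when $|\widetilde A(y)|=A_0/2$). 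So your argument delivers the stated bound only up to an additive $O(A_0^2/n_1^3)$, which is of the \emph{same} order as the leading terms, not lower. For the qualitative conclusion $\widetilde A\in X$ this is harmless, but it does not prove the inequality of Proposition~\ref{Proposition:4.2} with the exact right-hand side; the two-stage argument via $E_1$ (or an equivalent preliminary bound $|\widetilde A'|\le Cy$) is what closes this gap.
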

\begin{proof}
\noindent{\bf Step 1:} $\widetilde A'(y) \lesssim 1$ for small $y$.\\
By Remark \ref{Remark:4.1},
$\widetilde A(y) \leq A_0$ for $y \geq 0$
and
	\[
	\widetilde A'(y)^2
	\leq \frac{n_{1}^2}{16} A_0^2
	+ \frac{n_{1}}{8} A_0^4
	+ \frac{1}{16} A_0^6,
	\]
for $0 < y \leq n_1$.
\bigskip

\noindent{\bf Step 2:} $\widetilde A'(y) \lesssim y$ for any $y \geq n_1$.\\
Let $E_1$ be a modified energy defined by
	\[
	E_1(y)
	= \frac{1}{2} \widetilde A'(y)^2
	+ \frac{y^2}{32} \widetilde A(y)^2
	+ \frac{y}{16} \widetilde A(y)^4
	+ \frac{1}{32} \widetilde A(y)^6.
	\]
Then
	\[
	\frac{d}{dy} \bigg( \frac{1}{y^{2}} E_1(y) \bigg)
	= - \frac{1}{y^{3}} \widetilde A'(y)^2
	- \frac{1}{16 y^{2}} \widetilde A(y)^4
	- \frac{1}{16 y^{3}} \widetilde A(y)^6
	\leq 0.
	\]
Therefore, for $y \geq n_1$,
	\begin{align*}
	\frac{1}{2 y^{2}} \widetilde A'(y)^2
	&\leq \frac{1}{y^2} E_1(y)
	\leq \frac{1}{n_1^2} E_1(n_1)\\
	&\leq
	\frac{1}{2 n_1^{2}} \widetilde A'(n_1)^2
	\leq \frac{1}{32} A_0^2
	+ \frac{1}{16 n_{1}} A_0^4
	+ \frac{1}{32  n_1^{2}} A_0^6.
	\end{align*}
\bigskip

\noindent{\bf Step 3:} $\widetilde A(y) \lesssim y^{-1/2}$ and $\widetilde A'(y) \lesssim y^{1/2}$.\\
Let $E_2$ be another modified energy defined by
	\[
	E_2(y)
	= \frac 1 y E_1(y)
	+ \frac 1 {2 y^{2}} \widetilde A(y) \widetilde A'(y) + \frac 1 {2 y^{3}} \widetilde A(y)^2.
	\]
Then, for $y > n_1$,
	\begin{align*}
	E_2'(y)
	&= -\frac{1}{2 y^{2}} \widetilde A'(y)^2
	+ \frac{1}{32} \widetilde A(y)^2 - \frac{1}{32y^{2}} \widetilde A(y)^6\\
	&- \frac{1}{y^{3}} \widetilde A(y) \widetilde A'(y)
	+ \frac 1 {2 y^{2}} \widetilde A'(y)^2
	+ \frac 1 {2 y^{2}} \widetilde A(y) \widetilde A''(y)\\
	&- \frac 3 {2 y^{4}} \widetilde A(y)^2 + \frac{1}{y^{3}} \widetilde A(y) \widetilde A'(y)\\
	&= \frac{1}{32} \widetilde A(y)^2 - \frac{1}{32 y^{2}} \widetilde A(y)^6\\
	&- \frac{1}{32} \widetilde A(y)^2 - \frac{1}{8 y} \widetilde A(y)^4 - \frac{3}{32 y^{2}} \widetilde A(y)^6
	- \frac 3 {2 y^{4}} \widetilde A(y)^2\\
	&=
	- \frac{1}{8 y^{2}} \widetilde A(y)^6
	- \frac{1}{8y} \widetilde A(y)^4
	- \frac 3 {2 y^{4}} \widetilde A(y)^2
	\leq 0.
	\end{align*}
Therefore, for $y > n_1$,
	\begin{align*}
	&\frac{1}{2y} \widetilde A'(y)^2
	+ \frac{y}{32} \widetilde A(y)^2 + \frac{1}{16} \widetilde A(y)^4
	+ \frac{1}{32 y} \widetilde A(y)^6
	+ \frac 1 {2 y^{3}} \widetilde A(z)^2\\
	&\leq E_2(n_1) + \frac 1 {2 y^{2}} |\widetilde A(y) \widetilde A'(y)|\\
	&\leq \frac 1{n_1} E_1(n_1) + \frac 1 {2 y^{2}} |\widetilde A(y) \widetilde A'(y)|\\
	&\leq \frac{1}{2 n_1} \widetilde A'(n_1)^2
	+ \frac{1}{2 y} A_0
	\bigg( \frac{1}{16} A_0^2 + \frac{1}{8n_1} A_0^4 + \frac{1}{16 n_1} A_0^6
	\bigg)^{1/2}\\
	&\leq \frac{n_1}{32} A_0^2 + \frac{1}{16} A_0^4 + \frac{1}{32 n_1} A_0^6
	+ \frac{1}{2 n_1} A_0
	\bigg( \frac{1}{16} A_0^2 + \frac{1}{8 n_1} A_0^4  + \frac{1}{16 n_1} A_0^6
	\bigg)^{1/2}.
	\end{align*}
\end{proof}

\subsubsection{The Case $y > 0$}
Since the RHS of \eqref{eq:17}
is a mixture of positive and negative potentials,
solutions to \eqref{eq:17} is unbounded in general.
Therefore, we consider only small data for this case.

\begin{Proposition}
\label{Proposition:4.3}
There exists a constant $C_3 >0$ such that
for sufficiently small $\varepsilon >0$ and any $y \geq 0$,
a solution $A$ to \eqref{eq:17} with $(A_0,A_1) = (\varepsilon,0)$
satisfies
	\[
	|A(y)| \leq C_3 \varepsilon ( 1 + |y|)^{-1/2},
	\quad
	|A'(y)| \leq C_3 \varepsilon ( 1 + |y|)^{1/2}.
	\]
\end{Proposition}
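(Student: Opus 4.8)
The plan is to pass to the variable $B$ defined for $y>0$ by $A(y)=y^{-1/2}B(y^2/8)$, so that $B$ solves \eqref{eq:19}, and to prove that $B$ and $B'$ remain of size $O(\varepsilon)$ on all of $[\eta_0,\infty)$ for a fixed $\eta_0\ge 1$. Translating back through $\eta=y^2/8$ then gives exactly the asserted bounds, since $A'(y)=-\tfrac12 y^{-3/2}B+\tfrac14 y^{1/2}B'$, so that $|B|+|B'|\lesssim\varepsilon$ yields $|A(y)|\lesssim\varepsilon(1+|y|)^{-1/2}$ and $|A'(y)|\lesssim\varepsilon(1+|y|)^{1/2}$ for $y\ge y_0:=\sqrt{8\eta_0}$. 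On the fixed compact interval $[0,y_0]$ there is nothing subtle: the integral equation \eqref{eq:18} with $(A_0,A_1)=(\varepsilon,0)$, together with continuous dependence on the data (or a direct Gronwall estimate on $A^2+A'^2$), gives $|A(y)|+|A'(y)|\le C\varepsilon$ there, hence $|B(\eta_0)|+|B'(\eta_0)|\le C\varepsilon$.

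For $\eta\ge\eta_0$ I would introduce the modified energy
\[
\mathcal E(\eta)=\frac12\bigl(B'(\eta)^2+B(\eta)^2\bigr)-\frac{1}{8\eta}B(\eta)^4 .
\]
The correction term is designed to neutralize the only non-integrable contribution. Differentiating the naive energy $E=\tfrac12(B'^2+B^2)$ along \eqref{eq:19} produces the term $\tfrac{1}{2\eta}B^3B'$, whose coefficient $1/\eta$ is not integrable; but $\tfrac{1}{2\eta}B^3B'=\tfrac{1}{8\eta}\tfrac{d}{d\eta}(B^4)$ is, up to the integrable remainder $\tfrac{1}{8\eta^2}B^4$, a total derivative, and subtracting $\tfrac{1}{8\eta}B^4$ removes it. A direct computation then gives
\[
\mathcal E'(\eta)=-\frac{3}{16\eta^2}B B'-\frac{3}{64\eta^2}B^5 B'+\frac{1}{8\eta^2}B^4 ,
\]
in which every term now carries the integrable weight $\eta^{-2}$.

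From here the argument is a continuity (bootstrap) argument. Fix $K$ and assume $|B(\eta')|\le 2K\varepsilon\le 1$ for all $\eta'\in[\eta_0,\eta]$. Under this assumption the displayed identity gives $|\mathcal E'|\le C\eta^{-2}(B^2+B'^2)$, while for $\eta\ge\eta_0\ge1$ and $|B|\le1$ one checks $\tfrac12(B^2+B'^2)\le 2\mathcal E$ and $\mathcal E\ge0$; hence $|\mathcal E'(\eta)|\le C'\eta^{-2}\mathcal E(\eta)$. Gronwall's inequality together with $\int_{\eta_0}^{\infty}\eta^{-2}\,d\eta=\eta_0^{-1}<\infty$ yields $\mathcal E(\eta)\le \mathcal E(\eta_0)\,e^{C'/\eta_0}\le C_0\varepsilon^2$ uniformly, whence $B^2+B'^2\le 4\mathcal E\le 4C_0\varepsilon^2$ and $|B(\eta)|\le K\varepsilon$ with $K:=2\sqrt{C_0}$ a fixed constant depending only on $\eta_0$. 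Choosing $\varepsilon_0$ so small that $2K\varepsilon_0\le1$, the bound $|B|\le K\varepsilon$ is strictly better than the running assumption $|B|\le 2K\varepsilon$, so the set of $\eta$ on which the assumption holds is open, closed, and nonempty in $[\eta_0,\infty)$, hence all of it. Undoing the substitution finishes the proof.

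The main obstacle is exactly the non-integrability of the $1/\eta$ coefficient of the cubic term, already flagged in \eqref{eq:21}: a crude iteration on \eqref{eq:20} would accumulate a $\log\eta$ factor and fail to produce a uniform bound. The whole scheme hinges on recognizing that this term is, modulo an $O(\eta^{-2})$ error, the total derivative $\tfrac{1}{8\eta}(B^4)'$, so that it can be absorbed into a corrected energy rather than estimated pointwise; once corrected, all remaining forcing is integrable and a routine Gronwall/continuity argument closes. The only remaining bookkeeping is to verify that $\mathcal E$ stays comparable to $B^2+B'^2$ (which needs $\eta_0\ge1$ and $|B|$ small) and that the singular substitution at $y=0$ is handled separately on the fixed interval $[0,y_0]$.
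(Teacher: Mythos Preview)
Your argument is correct. The derivative computation for $\mathcal E$ checks out, the comparison $\tfrac14(B^2+B'^2)\le\mathcal E$ holds once $\eta_0\ge1$ and $|B|\le1$, and the bootstrap closes because the Gronwall constant $C_0$ depends only on $\eta_0$ and on the compact-interval bound for $B(\eta_0),B'(\eta_0)$, not on $K$.

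The paper's proof shares the same central idea---a modified energy with a quartic correction term to absorb the non-integrable cubic forcing---but carries it out in the original $A(y)$ variable rather than passing to $B(\eta)$. Concretely, the paper introduces
\[
E_3(y)=\tfrac12 A'^2+\tfrac{y^2}{32}A^2-\tfrac{y}{16}A^4+\tfrac{1}{32}A^6,
\]
proves $\frac{d}{dy}(y^{-2}E_3)\le\tfrac{1}{16y^2}A^4$, runs a continuity argument to get $|A|\le C\varepsilon$ and $|A'|\le C\varepsilon y$, and then uses a second energy $E_4=y^{-1}E_3+\tfrac{1}{2y^2}AA'+\tfrac{1}{2y^3}A^2$ together with an intermediate $\log y$ loss to obtain the sharp decay $|A|\lesssim\varepsilon y^{-1/2}$. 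Your route is more economical: transforming to $B$ makes the leading linear part autonomous ($B''+B=\cdots$), so a single corrected energy and one Gronwall step suffice, and both $|A|\lesssim\varepsilon y^{-1/2}$ and $|A'|\lesssim\varepsilon y^{1/2}$ drop out simultaneously. The paper's approach has the minor advantage of staying in the $A$ variable throughout, avoiding the singular change of variables at $y=0$; your approach handles that separately on $[0,y_0]$, which is indeed routine.
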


\begin{proof}
\noindent{\bf Step 1:} $|A|, |A'| \lesssim \varepsilon$ for small $y$.\\
At first, we show that
there exists $y_0 \geq 1/3$ such that for any $\varepsilon < 1$,
	\[
	\sup_{y \in (0,y_0)} |A(y)|
	\leq 2 \varepsilon.
	\]
Indeed,
for $y \in (0,y_0)$,
\eqref{eq:17} implies
	\[
	|A''(y)|
	\leq \frac{1}{8} y^2 \varepsilon
	+ 2 y \varepsilon^3
	+ 6 \varepsilon^5.
	\]
By integrating this and resulting estimate, we get
	\begin{align*}
	|A'(y)|
	&\leq \frac{1}{24} y^3 \varepsilon
	+ y^2 \varepsilon^3
	+ 6 y \varepsilon^5,\\
	|A(y)|
	&\leq \varepsilon
	+ \frac{1}{96} y^4 \varepsilon
	+ \frac{1}{3} y^3 \varepsilon^3
	+ 3 y^2 \varepsilon^5.
	\end{align*}
Therefore,
if $y \leq 1/3$,
then
	\begin{align}
	|A(y)|
	\leq \varepsilon
	+ \frac{1}{96} \varepsilon
	+ \frac{1}{3} \varepsilon^3
	+ \frac{1}{3} \varepsilon^5
	< 2 \varepsilon.
	\label{eq:30}
	\end{align}
By the continuity of $A$,
if $\sup_{y \geq 0} |A(y)| > 2 \varepsilon$,
we can conclude that there exists $y_0 > 0$ so that $|A(y_0)| = 2 \varepsilon$.
Therefore \eqref{eq:30} implies that $y_0 \geq 1/3$.

\bigskip

\noindent{\bf Step 2:} $A'(y) \lesssim y$.\\
Let $E_3$ be the modified energy defined by
	\[
	E_3(y)
	= \frac{1}{2} A'(y)^2
	+ \frac{y^2}{32} A(y)^2
	- \frac{y}{16} A(y)^4
	+ \frac{1}{32} A(y)^6.
	\]
Then
	\begin{align}
	\frac{d}{dy} \bigg( \frac{1}{y^{2}} E_3(y) \bigg)
	= - \frac{1}{y^{3}} A'(y)^2
	+ \frac{1}{16 y^{2}} A(y)^4
	- \frac{1}{16 y^{3}} A(y)^6
	\leq \frac{1}{16 y^{2}} A(y)^4.
	\label{eq:31}
	\end{align}
Therefore, for $y \geq y_0$,
	\[
	\frac{1}{32} A(y)^2
	\leq \frac{1}{y_0^{2}} E_3(y_0) + \frac{1}{16 y} A(y)^4
	+ \int_{y_0}^y \frac{1}{16 y^{\prime 2}} A(y^\prime)^4 dy^\prime.
	\]
By putting $F(y) = \sup_{y^\prime \in \lbrack y_0,y)} A(y^\prime)^2$,
we have $F(y_0)= 4 \varepsilon^2$ and
	\[
	F(y)
	\leq C \varepsilon^2 + \frac 2 {y_0} F(y)^2,
	\]
for sufficiently large constant $C$.
This estimate is rewritten as
	\begin{align}
	(F(y) - \frac{y_0}{4})^2 + \frac{C y_0}{2} \varepsilon^2 - \frac{y_0^2}{16} \geq 0.
	\label{eq:32}
	\end{align}
Then let $\varepsilon$ be sufficiently small so that
	\begin{align}
	4 \varepsilon^2 \leq \frac{y_0}{4},
	\label{eq:33}\\
	\frac{C y_0}{2} \varepsilon^2 \leq \frac{y_0^2}{16}.
	\label{eq:34}
	\end{align}
The inequality \eqref{eq:34} implies that \eqref{eq:32} holds only in the case where
	\[
	F(y) < \frac{y_0}{4} - \sqrt{\frac{y_0^2}{16} - \frac{C y_0}{2} \varepsilon^2},
	\qquad \mathrm{or} \qquad
	F(y) > \frac{y_0}{4} + \sqrt{\frac{y_0^2}{16} - \frac{C y_0}{2} \varepsilon^2}.
	\]
Since $F(y_0)$ satisfies \eqref{eq:32}, \eqref{eq:33} implies that for any $y \geq y_0$,
	\[
	F(y)
	< \frac{y_0}{4} - \sqrt{\frac{y_0^2}{16} - \frac{C y_0}{2} \varepsilon^2}
	< \sqrt{\frac{C y_0}{2}} \varepsilon.
	\]
Therefore $|A(y)| \leq C \varepsilon^{1/2}$.
By integrating \eqref{eq:31} with this upper bound,
we have $y^{-2} E_3(y) \leq C \varepsilon^2$
and consequently,
$|A(y)| \leq C_1 \varepsilon$ and $|A^\prime(y)| \leq C_2 \varepsilon y$ for any $y > y_0$
with some positive constants $C_1$ and $C_2$.
\bigskip

\noindent{\bf Step 3:} $A(y) \lesssim y^{-1/2}$ and $A'(y) \lesssim y^{1/2}$.\\
Let $E_4$ be a modified energy defined by
	\[
	E_4(y)
	= \frac{1}{y} E_3(y)
	+ \frac 1 {2 y^{2}} A(y) A'(y) + \frac 1 {2 y^{3}} A(y)^2.
	\]
Then, for $y > y_0$,
	\begin{align*}
	E_4'(y)
	=
	- \frac{1}{8 y^{2}} A(y)^6
	+ \frac{1}{8 y} A(y)^4
	- \frac 3 {2 y^{4}} A(y)^2
	\leq \frac{1}{8 y} A(y)^4.
	\end{align*}
Therefore, for $y > y_0$, by the estimates of Step 2,
	\begin{align}
	&\frac{1}{2 y} A'(y)^2
	+ \frac{y}{32} A(y)^2 + \frac{1}{32y} A(y)^6
	+ \frac 1 {2 y^{3}} A(y)^2
	\nonumber\\
	&\leq \frac{1}{y_0} E_3(y_0) + \frac 1 {2 y^{2}} |A(y) A'(y)| + \frac{1}{16} A(y)^4
	+ \int_{y_0}^y \frac{1}{8 y^\prime} A(y^\prime)^4 dy^\prime
	\label{eq:35}\\
	&\leq C_3 \varepsilon^2 + C_4 \varepsilon^2 \log(y).
	\nonumber
	\end{align}
This implies $|A(y)| \lesssim \varepsilon y^{-1/2} ( 1 + \log y)$.
Substituting this to \eqref{eq:35},
we see that $E_4'$ is integrable on $\lbrack y_0, \infty)$,
which implies our assertion.
\end{proof}

\subsection{Proof of Global existence of solutions to \eqref{eq:17}}
Propositions \ref{Proposition:4.2} and \ref{Proposition:4.3} imply
the a priori control
	\[
	\| \langle \cdot \rangle^{1/2} A \|_{L^\infty(0,\infty)}
	+ \| \langle \cdot \rangle^{-1/2} A' \|_{L^\infty(0,\infty)}
	\leq C \varepsilon,
	\]
where $C$ is independent of $\varepsilon$.
This a priori bound implies that local solutions to \eqref{eq:17} are extended arbitrarily
and $A \in X$.

\subsection{Asymptotic behavior}
At first, we consider the case where $y < -1$.
Again we put $-y$ as $y$ and consider a solution $\widetilde A$ to \eqref{eq:26}.
Let $B(y^2/8) = |y|^{1/2} \widetilde A(y)$.
We remark that $\widetilde A \in X$ implies that $B$ and $B'$ are bounded for $|y| > 1$.
Since
	\[
	\widetilde A''(y)
	= \frac 1 {16} y^{3/2} B''( \frac{y^2}{8})
	+ \frac 3 4 y^{-5/2} B( \frac{y^2}{8}),
	\]
the equation \eqref{eq:17} is rewritten as
	\begin{align}
	B''(\eta) + B(\eta)
	=
	- \frac 1 {2\eta} B(\eta)^3
	- \frac 3 {16 \eta^{2}} B(\eta)
	- \frac{3}{64 \eta^2} B(\eta)^5
	\label{eq:36}
	\end{align}
with $\eta = y^2/8$.
Therefore, energy of \eqref{eq:36} is computed to be
	\begin{align}
	E_B(\eta)
	&= \frac 1 2 B'(\eta)^2 + \frac 1 2 B(\eta)^2 + F_B(\eta)
	\equiv E_B,
	\label{eq:37}\\
	F_B(\eta)
	&= \frac 1 {8\eta} B^4(\eta) - \int_{\eta}^\infty
	\bigg( \frac 1 {8} B(\zeta)^4
	+ \frac 3 {16} B(\zeta) B^\prime(\zeta)
	+ \frac{3}{64} B(\zeta)^5 B^\prime(\zeta) \bigg) \frac{d \zeta}{\zeta^2}.
	\nonumber
	\end{align}
Since $B$ and $B^\prime$ are bounded, $F_B(\eta) = O(1/\eta)$.
Now, we consider real valued functions $R_B$ and $\omega$ satisfying
$B(\eta) = R_B(\eta) \cos(\omega(\eta))$ and $B'(\eta) = R_B(\eta) \sin(\omega(\eta))$.
Then \eqref{eq:37} implies
	\begin{align}
	R_B^2(\eta) = 2 E_B + O(\eta^{-1}).
	\label{eq:38}
	\end{align}
Next, we put $G(\eta) = R_B(\eta) e^{i \omega(\eta)} = B(\eta) + i B^\prime(\eta)$.
Then
	\begin{align}
	G'(\eta) + i G(\eta)
	&= R'(\eta) e^{i \omega(\eta)}
	+ i\omega'(\eta) R(\eta) e^{i \omega(\eta)} + i R(\eta) e^{i \omega(\eta)}
	\nonumber\\
	&=
	- \frac i \eta \bigg( \frac 1 {2} B(\eta)^3
	+ \frac 3 {16 \eta} B(\eta)
	+ \frac{3}{64\eta} B(\eta)^5 \bigg).
	\label{eq:39}
	\end{align}
By multiplying $e^{- i \omega(\eta)}$ both hand sides of \eqref{eq:39}
and taking real and imaginary parts of resulting identity,
	\begin{align}
	R'(\eta)
	&= - \frac{\sin(\omega(\eta))}{\eta} \bigg( \frac 1 {2} B(\eta)^3
	+ \frac 3 {16 \eta} B(\eta)
	+ \frac{3}{64\eta} B(\eta)^5 \bigg),
	\label{eq:40}\\
	\omega'(\eta)
	&= - 1 - \frac{\cos(\omega(\eta))}{\eta R(\eta)}
	\bigg( \frac 1 {2} B(\eta)^3
	+ \frac 3 {16 \eta} B(\eta)
	+ \frac{3}{64\eta} B(\eta)^5 \bigg)
	\nonumber\\
	&= -1 - \frac{Q_-^2 \cos(\omega(\eta))^4}{\eta}
	+ O(\eta^{-3/2}),
	\label{eq:41}
	\end{align}
where $Q_- = \sqrt 2 E_B$ and we have used \eqref{eq:38}.
We remark that the assumption $B(\eta) = R(\eta) \cos(\omega(\eta))$
implies that $B'(\eta)$ is also computed as
	\begin{align}
	B'(\eta) = R'(\eta) \cos(\omega(\eta)) - R(\eta) \omega'(\eta) \sin(\omega(\eta)).
	\label{eq:42}
	\end{align}
Substituting \eqref{eq:40} and \eqref{eq:41} into \eqref{eq:42},
the identity $B'(\eta) = R(\eta) \sin(\omega(\eta))$ is realized.
Then from \eqref{eq:41},
for $\eta > 1$,
	\begin{align*}
	&\omega(\eta)\\
	&= \omega(1) - (\eta-1) - \int_1^\eta \frac{Q_-^2 \cos(\omega(\zeta))^4}{\zeta}
	+ O(\zeta^{-3/2}) d\zeta\\
	&= \omega_{\ast,-}
	- \eta - \int_1^\eta
	\frac{Q_-^2 (3 + \cos(4\omega(\zeta)) + 4 \cos(2 \omega(\zeta)))}{8\zeta} d \zeta
	+ O(\eta^{-1/2})\\
	&= \omega_{\ast,-} - \eta - \frac 3 8 Q_-^2 \log(\eta)
	- \int_1^\eta
	\frac{Q_-^2 (\cos(4\omega(\zeta)) + 4 \cos(2 \omega(\zeta)))}{8\zeta} d \zeta
	+ O(\eta^{-1/2}),
	\end{align*}
where $\omega_{\ast,-}$ is a constant.
Then, for the proof of  Proposition \ref{Proposition:1.4},
it is sufficient to show
	\[
	\int_1^\eta \frac{\cos(\omega(\zeta))}{\zeta} d \zeta
	= \mathrm{Const.} + O(\eta^{-1}).
	\]
Without loss of generality, we assume $\omega'(\eta) < - 1/2$ for $\eta > 1$.
If not, we can find a constant $\eta_0$
such that $\omega'(\eta) < - 1/2$ for $\eta > \eta_0$
and consider the integral on the interval between $\eta_0$ and $\eta$.
By integration by parts,
	\begin{align*}
	\int_1^\eta \frac{\cos(\omega(\zeta))}{\zeta} d \zeta
	&= \frac{\sin(\omega(\zeta))}{\zeta \omega'(\zeta)} \bigg|_1^\eta
	+ \int_1^\eta \frac{\omega'(\zeta) + \zeta \omega''(\zeta)}{\zeta^2 \omega'(\zeta)^2}
	\sin(\omega(\zeta)) d \zeta\\
	&= - \frac{\sin(\omega(1))}{\omega'(1)}
	+ \int_1^\infty \frac{\omega'(\zeta) + \zeta \omega''(\zeta)}{\zeta^2 \omega'(\zeta)^2}
	\sin(\omega(\zeta)) d \zeta
	+ O(\eta^{-1}),
	\end{align*}
where the second integral in the last line is bounded
because $\omega'$, $B$ and $B'$ are bounded.

In the case where $y > 1$,
\eqref{eq:22} is similarly obtained by replacing \eqref{eq:36} with
	\[
	B''(\eta) + B(\eta)
	=
	\frac 1 {2\eta} B(\eta)^3
	- \frac 3 {16 \eta^{2}} B(\eta)
	- \frac{3}{64 \eta^2} B(\eta)^5.
	\]

\appendix


\end{document}